\documentclass[11pt]{article}
\usepackage[a4paper]{geometry}
\usepackage[linesnumbered,ruled,vlined]{algorithm2e}
\usepackage{amsfonts,amsmath,amsopn,amssymb}
\usepackage{braket}

\usepackage{enumitem}
\usepackage{graphicx}
\usepackage{epstopdf}
\usepackage{subcaption}
\usepackage{caption}
\usepackage[colorlinks,linkcolor=blue,anchorcolor=blue,citecolor=blue]{hyperref}
\usepackage{cleveref}
\usepackage{ntheorem}
\usepackage{url}
\usepackage{xcolor}
\usepackage{booktabs}
\usepackage{multirow}
\usepackage{diagbox}
\numberwithin{equation}{section}

\theoremheaderfont{\normalfont\bf}
\theorembodyfont{\normalfont\rm}

\crefname{algorithm}{Algorithm}{Algorithms}
\Crefname{algorithm}{Algorithm}{Algorithms}

\newtheorem{definition}{Definition}[section]
\newtheorem{remark}{Remark}[section]
\newtheorem{theorem}{Theorem}[section]

\newtheorem{proposition}{Proposition}[section]

\newenvironment{keywords}{{\noindent\it {\bf Key~words.}}\quad}{}
\newenvironment{proof}{{\noindent\it Proof.}\quad}{\hfill $\square$\par}

\DeclareMathOperator{\rank}{rank}

\SetKwInput{KwInput}{Input}         
\SetKwInput{KwOutput}{Output}       

\begin{document}
\title{QR Decomposition of Dual Matrices and its Application to Traveling Wave Identification in the Brain}
\author{Renjie Xu \thanks{Center for Intelligent Multidimensional Data Analysis, Hong Kong Science Park, Shatin, Hong Kong, China.  Email: \href{mailto:renjie@innocimda.com}{renjie@innocimda.com.} This author is supported by  the Hong Kong Innovation and Technology Commission (InnoHK Project CIMDA).}
	\and Tong Wei \thanks{Institute of Science and Technology for Brain-Inspired Intelligence, Fudan University, Shanghai, 200433, P.R. China. Email: \href{mailto:20110850008@fudan.edu.cn}{20110850008@fudan.edu.cn.}
    This author is partially supported by Science and Technology Commission of Shanghai Municipality (No. 23ZR1403000, 20JC1419500, 2018SHZDZX0).}
    \and  Yimin Wei \thanks{ School of Mathematical Sciences and Shanghai Key Laboratory of Contemporary Applied Mathematics, Fudan University, Shanghai, 200433, P.R. China. Email: \href{mailto:ymwei@fudan.edu.cn}{ymwei@fudan.edu.cn.} This author is supported by the National Natural Science Foundation of China under grant 12271108 and the Ministry of Science and Technology of China under grant G2023132005L.}
    \and Pengpeng Xie \thanks{Corresponding author (P. Xie).      School of Mathematical Sciences,  Ocean University of China, Qingdao 266100, P.R. China. \href{mailto:xie@ouc.edu.cn }{xie@ouc.edu.cn.}This author is supported by the National Natural Science Foundation of China under grant 12271108, Fundamental Research Funds for the Central Universities (No. 202264006), Laoshan
    	Laboratory (LSKJ202202302) and Qingdao Natural Science Foundation 23-2-1-158-zyyd-jch. .}
}
\date{}
\maketitle

\begin{abstract}
Matrix decompositions in dual number representations have played an important role in fields such as kinematics and computer graphics in recent years.
In this paper, we present a QR decomposition algorithm for dual number matrices, specifically geared towards its application in traveling wave identification, utilizing the concept of proper orthogonal decomposition.
When dealing with large-scale problems, we present explicit solutions for the QR, thin QR, and randomized QR decompositions of dual number matrices, along with their respective algorithms with column pivoting.
The QR decomposition of dual matrices is an accurate first-order perturbation, with the Q-factor satisfying rigorous perturbation bounds, leading to enhanced orthogonality.
In numerical experiments, we discuss the suitability of different QR algorithms when confronted with various large-scale dual matrices, providing their respective domains of applicability.
Subsequently, we employed the QR decomposition of dual matrices to compute the DMPGI, thereby attaining results of higher precision.
 Moreover, we apply the QR decomposition in the context of traveling wave identification, employing the notion of proper orthogonal decomposition to perform a validation analysis of large-scale functional magnetic resonance imaging (fMRI) data for brain functional circuits.
Our approach significantly improves the identification of two types of wave signals compared to previous research, providing empirical evidence for cognitive neuroscience theories.
\end{abstract}

\begin{keywords}
Dual matrices, QR decomposition, Proper orthogonal decomposition, Randomized algorithm, Traveling wave identification, Brain dynamics
\end{keywords}

\bigskip

\noindent{\bf Mathematics Subject Classification.} 15A23, 15B33, 65F55


\section{Introduction}\label{sec: introduction}
Time series analysis is a technique for uncovering hidden information and patterns within time-ordered data.
Principal Component Analysis (PCA) is employed to interpret the meaning of principal components through orthogonal transformations in time series processes \cite{yang2004pca}.
In statistics, only a small fraction of components contain a significant amount of initial information, and their importance decreases sequentially.
However, despite the orthogonality between these principal components, they may not necessarily be independent.

A natural idea is to utilize dual numbers to augment the original data with differentials related to time.
This concept has been applied in the context of traveling wave detection \cite{wei2024singular}.
However, this approach exhibits limitations in terms of orthogonality, particularly in the application of brainwave detection.
In this paper, we introduce the notion of appropriate orthogonal decomposition, considering the fact that the target time series also represents signal waves.
We propose the QR decomposition of dual matrices as an effective and efficient solution to enhance traveling wave detection.

 Clifford \cite{clifford1871preliminary} initially introduced dual numbers in 1873 as a hyper-complex number system, constitute an algebraic structure denoted as $a + b\epsilon$, where `$a$' and `$b$' are real numbers, and $\epsilon$ satisfies $\epsilon^{2} = 0$ with $\epsilon \neq 0$.
These dual numbers can be viewed as an infinitesimal extension of real numbers, representing either minute perturbations of the standard part or time derivatives of time series data.

Dual numbers find applications across diverse domains, including robotics \cite{sola2017quaternion}, computer graphics \cite{peon2019dual}, control theory \cite{fliess2013model}, optimization \cite{baydin2018automatic}, automatic differentiation \cite{fike2012automatic}, and differential equations \cite{fornberg1988generation}, owing to their distinctive properties.
Consequently, researchers are increasingly exploring the fundamental theoretical aspects of dual matrices, encompassing eigenvalues \cite{qi2021eigenvalues}, the singular value decomposition (SVD) \cite{gutin2022generalizations, wei2024singular}, the low-rank approximations \cite{qi2022low,xiong2022low,xu2024utv}, dual Moore-Penrose pseudo-inverse \cite{angeles2012dual, cui2023perturbations,pennestri2018moore,udwadia2021dual,udwadia2021does,udwadia2020all,wang2021characterizations}, dual rank decomposition \cite{wang2023dual}, and the QR decomposition \cite{pennestri2007linear}, among others.

The properties and algorithms for dual matrix decomposition have long been a focal point of research.
Previous studies primarily utilized the Gram-Schmidt orthogonalization process or redundant linear systems via Kronecker products to compute matrix decompositions.
For instance, Pennestr{\`\i} {\it et al.} \cite{pennestri2018moore} employed the Gram-Schmidt orthogonalization process to compute the QR decomposition of dual matrices.
Qi {\it et al.} \cite{qi2023eigenvalues} used redundant linear systems based on Kronecker products to solve the Jordan decomposition of dual complex matrices with a Jordan block standard part.
These approaches are not well-posed for solving large-scale problems.

Recently, Wei {\it et al.} \cite{wei2024singular} introduced a compact dual singular value decomposition (CDSVD) algorithm with explicit solutions.
By observing the solving process, we realize that the essence of computing these decompositions is solving a generalized Sylvester equation.
Building upon this idea, we provide the QR decomposition of dual matrices (DQR), the thin QR decomposition of dual matrices (tDQR), and the randomized QR decomposition of dual matrices (RDQR), all of which account for column-pivoting scenarios.
The QR decomposition of dual matrices also offers an effective approach for computing the dual Moore-Penrose generalized inverse (DMPGI), ensuring heightened precision in the resultant outcomes.
The DQR, tDQR, and RDQR proposed here are the proper orthogonal decomposition that we require in time series models.
In traveling wave detection, we can see that the DQR, tDQR, and RDQR algorithms perfectly fulfill this function.
Moreover, it exhibits stronger orthogonality in large-scale functional magnetic resonance imaging (fMRI) data on brain activity, enabling better identification of traveling wave signals in brain functional areas.
In large-scale problems, we can also observe that our proposed QR decomposition is faster than the CDSVD.

The rest of this paper is structured as follows.
In Section \ref{sec: Preliminaries}, we present some operational rules and notations for dual numbers and introduce the Sylvester equation.
In Section \ref{sec: QR Decomposition of Dual matrices}, we introduce the concepts of proper orthogonal decomposition for the dual matrix, leading to the development of the QR decomposition (DQR), thin QR decomposition (tDQR), and randomized QR decomposition (RDQR) for the dual matrix, all of which account for column-pivoting
scenario.
We demonstrate the conditions for the existence of these decompositions and provide algorithms for their computations.
In Section \ref{sec: Perturbation Analysi}, we provide a perturbation analysis perspective to demonstrate that the QR decomposition of dual matrices is an accurate first-order perturbation, satisfying the rigorous perturbation bounds for the Q-factor in the QR decomposition.
This confirms its improved orthogonality.
In Section \ref{sec: Numerical Exiperment}, we compare the computational times of different QR decompositions for large-scale dual matrices and find that their applicability aligns with theoretical expectations.
Subsequently, we employed the QR decomposition of dual matrices to compute the DMPGI, thereby attaining results of higher precision.
Moreover, we verify that the QR decomposition, based on the idea of proper orthogonal decomposition, can effectively identify traveling waves.
Additionally, we demonstrate its utility in accurately characterizing brain functional regions using functional magnetic resonance imaging (fMRI) data on wave patterns.

\section{Preliminaries}\label{sec: Preliminaries}
In this section, we briefly review some basic results for dual matrices and the Sylvester equation.
Furthermore, scalars, vectors, and matrices are denoted by lowercase ($a$), bold lowercase ($\bf{a}$),  and uppercase letters ($A$), respectively.
In some representations of matrix results, we will employ the notation used in MATLAB.

\subsection{Dual numbers}
In mathematics, dual numbers form an extension of real numbers by introducing a second component with the infinitesimal unit $\epsilon$ satisfying $\epsilon^{2} = 0$ and $\epsilon\neq 0$.
This component plays a crucial role in capturing infinitesimal changes and uncertainties in various mathematical operations.

We denote $\mathbb{DR}$ as the set of dual real numbers. Let $d = d_{s} +  d_{i}\epsilon$ be a dual real number where $d_{s}\in \mathbb{R}$ and  $d_{i}\in \mathbb{R}$.
In this paper, $d_{s}$ and $d_{i}$ are represented as the standard part and the infinitesimal part of $d$, respectively.
If $d_{s} \neq 0$, we say that $d$ is appreciable. Otherwise, $d$ is infinitesimal.
For two real matrices $A_{s},A_{i}\in \mathbb{R}^{m \times n}$, $A = A_{s} + A_{i} \epsilon \in \mathbb{DR}^{m \times n}$  represents a dual real  matrix, where $\mathbb{DR}^{m \times n}$ is the set of dual real matrices.
The following proposition shows the preliminary properties of dual real matrices.

\begin{proposition}\label{prop: dual properties}
    \cite{qi2021eigenvalues} Suppose that $A = (a_{ij}) = A_{s}+A_{i} \epsilon \in \mathbb{DR}^{m\times n}$, $B = B_{s} +B_{i}\epsilon \in \mathbb{DR}^{n\times p}$ and $C =C_{s} +C_{i}\epsilon \in \mathbb{DR}^{n \times n}$. Then
    \begin{enumerate}
        \item The transpose of $A$ is $A^{\top} = (a_{ji}) = A_{s}^{\top}+A_{i}^{\top}\epsilon$.
        Moreover, $(AB)^{\top} = B^{\top} A^{\top}.$
        \item The dual matrix $C$ is diagonal, if the matrices $C_{s}$ and $C_{i}$ are both diagonal.
        \item The dual matrix $C$ is orthogonal, if $C^{\top}C = I_{n}$, where $I_{n} $ is an $n \times n$ identity matrix. Furthermore, $CC^{\top} = I_{n}$, confirms the orthogonal property of the dual matrix $C$.
        \item The dual matrix $C$ is invertible (nonsingular), if $CD = DC =I_{n}$ for some $D \in \mathbb{DR}^{n \times n}$. Furthermore, $D$ is unique and denoted by $C^{-1}$, where $C^{-1} = C_{s}^{-1}-C_{s}^{-1}C_{i}C_{s}^{-1} \epsilon$.
        \item When $n\geq p$, the dual matrix $B$ is defined to have orthogonal columns if $B^{\top}B = I_{n}$. In particular, $B_{s}$ is a real matrix with orthogonal columns, and $B_{s}^{\top}B_{i} + B_{i}^{\top}B_{s} = O_{p}$, where $O_{p}$ denotes a $p \times p$ zero matrix.
    \end{enumerate}
\end{proposition}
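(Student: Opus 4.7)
The plan is to prove all five items by the single unifying device of expanding every dual quantity as (standard part) $+$ (infinitesimal part)$\,\epsilon$, multiplying out while discarding any term with an $\epsilon^{2}$ factor, and then equating the standard and $\epsilon$-components on each side. Since the five assertions are independent, I would handle them in the order listed, starting with the easier algebraic identities and working up to the orthogonality statements.

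For item 1, I would first check that $A^{\top} = A_{s}^{\top} + A_{i}^{\top}\epsilon$ directly from the entrywise definition, then write $AB = A_{s}B_{s} + (A_{s}B_{i} + A_{i}B_{s})\epsilon$ and take the transpose of each real coefficient to obtain $(AB)^{\top} = B_{s}^{\top}A_{s}^{\top} + (B_{i}^{\top}A_{s}^{\top} + B_{s}^{\top}A_{i}^{\top})\epsilon$, which is exactly $B^{\top}A^{\top}$. Item 2 is immediate, since an entry $c_{ij}$ of $C$ is itself a dual number whose two real components are $(C_{s})_{ij}$ and $(C_{i})_{ij}$, and both vanish for $i \neq j$ under the hypothesis. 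For item 4, I would substitute $D = D_{s} + D_{i}\epsilon$ into $CD = I_{n}$ to get the pair $C_{s}D_{s} = I_{n}$ and $C_{s}D_{i} + C_{i}D_{s} = O$; the first forces $D_{s} = C_{s}^{-1}$, and solving the second for $D_{i}$ yields the stated formula. I would then verify $DC = I_{n}$ by the same expansion, and derive uniqueness from the uniqueness of $C_{s}^{-1}$.

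For items 3 and 5, I would expand $C^{\top}C = I_{n}$ (respectively $B^{\top}B = I_{n}$) into the two real equations
\begin{equation*}
C_{s}^{\top}C_{s} = I_{n}, \qquad C_{s}^{\top}C_{i} + C_{i}^{\top}C_{s} = O,
\end{equation*}
which already proves the characterization of orthogonal columns in item 5. For item 3, the extra claim $CC^{\top} = I_{n}$ needs me to pass from the above real identities to $C_{s}C_{s}^{\top} = I_{n}$ and $C_{s}C_{i}^{\top} + C_{i}C_{s}^{\top} = O$. The first is automatic because $C_{s}$ is a square real matrix with $C_{s}^{\top}C_{s} = I_{n}$, hence $C_{s}^{-1} = C_{s}^{\top}$. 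For the second I would left-multiply the $\epsilon$-equation by $C_{s}$ and right-multiply by $C_{s}^{\top}$, using $C_{s}C_{s}^{\top} = I_{n}$ twice to collapse the product into the desired form.

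I expect the only mildly non-routine step to be this last manipulation in item 3, because items 1, 2, and 4 are essentially mechanical and item 5 stops before needing the conversion between left and right orthogonality. The squareness of $C_{s}$ is what makes item 3 work and also marks the boundary with item 5, so I would take care to emphasize where squareness is used and where it is not.
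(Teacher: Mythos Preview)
Your proposal is correct and follows the standard, expected route of splitting every dual equation into its standard and infinitesimal components and then arguing over the reals. The paper itself does not supply a proof of this proposition at all: it is simply quoted from the cited reference \cite{qi2021eigenvalues}, so there is no in-paper argument to compare against. Your plan is exactly how one would fill in the omitted verification, and your observation about where the squareness of $C_{s}$ enters (item 3 versus item 5) is the right point to highlight.
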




\subsection{Sylvester equation}\label{sec: Sylvester Equation}
The Sylvester equation, named in honor of the 19th-century mathematician James Joseph Sylvester, represents a prominent matrix equation within the domain of linear algebra.
It finds extensive utility across diverse realms of scientific and engineering disciplines, encompassing control theory, signal processing, system stability analysis, and numerical analysis.
The Sylvester equation emerges prominently in inquiries linked to time-invariant linear systems, with its solutions serving as indispensable tools for elucidating the characteristics and attributes of these intricate systems.
The equation is of the form,
\begin{equation}\label{equ: Sylvester equation }
    AX + XB = C,
\end{equation}
where $A, B$, and $C$ are given matrices of appropriate dimensions, and $X$ is the unknown matrix that we seek to find.
 There exists a unique solution $X$ exactly when the spectra of $A$ and $-B$ are disjoint.
Here, we first introduce the concept of the Moore-Penrose pseudoinverse.

\begin{definition}\label{def: Moore Penrose Inverse}
    \cite{golub2013matrix} Given $A\in \mathbb{R}^{m\times n}$, its Moore-Penrose pseudoinverse, denoted as $A^{\dagger}$ which is a matrix that satisfies the following four properties,
    \begin{equation}
        AA^{\dagger}A = A, \quad A^{\dagger}AA^{\dagger} = A^{\dagger}, \quad (AA^{\dagger})^{\top} = AA^{\dagger}, \quad (A^{\dagger}A)^{\top} = A^{\dagger}A.
    \end{equation}
\end{definition}
Then the solution form of the generalized Sylvester equation can be given by the following theorem.
\begin{theorem}\label{the: solution of Sylvester equation}
    \cite{Baksalary1979TheME} Given $A\in \mathbb{R}^{m \times k}, B\in \mathbb{R}^{l \times n}$, and $C\in\mathbb{R}^{m \times n}$. The equation
    \begin{equation}\label{equ: Ax-YB =C}
        AX-YB = C,
    \end{equation}
    has a solution $X\in \mathbb{R}^{k \times n},Y \in \mathbb{R}^{m \times l}$ if and only if
    \begin{equation}
        (I_{m}-AA^{\dagger})C(I_{n}-B^{\dagger}B) = O_{m\times n}.
    \end{equation}
    If this is the case, then the general solution of (\ref{equ: Ax-YB =C}) has the form
    \begin{equation}
        \begin{cases}
        X = A^{\dagger}C+A^{\dagger}ZB+(I_{k}-A^{\dagger}A)W,\\
        Y = -(I_{m}-AA^{\dagger})CB^{\dagger}+Z-(I_{m}-AA^{\dagger})ZBB^{\dagger},
        \end{cases}
    \end{equation}
    with $W\in \mathbb{R}^{k\times n}$, $Z\in\mathbb{R}^{m\times l}$  being arbitrary and $A^\dagger$ representing the Moore-Penrose pseudoinverse of $A$.
\end{theorem}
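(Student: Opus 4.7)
The plan is to separate the statement into the solvability criterion and the description of the full solution set, and to handle each with the projector identities for Moore--Penrose inverses. Denote by $P_{A} = AA^{\dagger}$ the orthogonal projector onto the column space of $A$ and by $Q_{B} = B^{\dagger}B$ the orthogonal projector onto the row space of $B$, so that the identities $(I_{m} - P_{A})A = O$ and $B(I_{n} - Q_{B}) = O$ are the only facts about $A^{\dagger}$ and $B^{\dagger}$ that will be needed repeatedly.

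For the necessity of the compatibility condition, I would left-multiply the equation $AX - YB = C$ by $(I_{m} - P_{A})$ and right-multiply by $(I_{n} - Q_{B})$. The $AX$ term is annihilated on the left by $(I_{m} - P_{A})A = O$, while the $YB$ term is annihilated on the right by $B(I_{n} - Q_{B}) = O$, forcing $(I_{m} - P_{A})C(I_{n} - Q_{B}) = O_{m\times n}$.

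For sufficiency, I would first exhibit the particular solution $X_{0} = A^{\dagger}C$, $Y_{0} = -(I_{m} - P_{A})CB^{\dagger}$ and verify by direct substitution that
\begin{equation*}
AX_{0} - Y_{0}B \;=\; P_{A}C + (I_{m} - P_{A})C\,Q_{B} \;=\; C - (I_{m} - P_{A})C(I_{n} - Q_{B}),
\end{equation*}
which collapses to $C$ precisely under the hypothesis. Then I would parametrize the homogeneous equation $AX = YB$: setting $Y = Z - (I_{m} - P_{A})ZBB^{\dagger}$ for arbitrary $Z \in \mathbb{R}^{m \times l}$ gives $YB = P_{A}ZB$ since $BB^{\dagger}B = B$, so the equation reduces to $AX = AA^{\dagger}ZB$, whose general solution is $X = A^{\dagger}ZB + (I_{k} - A^{\dagger}A)W$ for arbitrary $W \in \mathbb{R}^{k \times n}$. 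Adding this two-parameter homogeneous family to $(X_{0}, Y_{0})$ reproduces the formulas in the statement.

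The main obstacle is to justify that the homogeneous parametrization above is exhaustive, since a priori it only produces many homogeneous solutions rather than all of them. To close this gap, for an arbitrary pair $(X', Y')$ satisfying $AX' = Y'B$, I would pick $Z = Y'$ and $W = X' - A^{\dagger}Y'B$; the identities $(I_{m} - P_{A})Y'B = (I_{m} - P_{A})AX' = O$ and $A(X' - A^{\dagger}Y'B) = (I_{m} - P_{A})Y'B = O$ then ensure the parametrization recovers $(X', Y')$ exactly, completing the characterization of the general solution.
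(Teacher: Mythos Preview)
Your argument is correct. The necessity step is the standard projector sandwich, the particular solution is verified cleanly via the splitting $C = P_{A}C + (I_{m}-P_{A})CQ_{B} + (I_{m}-P_{A})C(I_{n}-Q_{B})$, and your exhaustiveness check for the homogeneous part is valid: from $(I_{m}-P_{A})Y'B = O$ one indeed gets $(I_{m}-P_{A})Y'BB^{\dagger} = \bigl[(I_{m}-P_{A})Y'B\bigr]B^{\dagger} = O$, recovering $Y=Y'$; and $A(X'-A^{\dagger}Y'B)=O$ forces $A^{\dagger}A\,(X'-A^{\dagger}Y'B)=O$, so $(I_{k}-A^{\dagger}A)W = X'-A^{\dagger}Y'B$ and hence $X=X'$.

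As for comparison with the paper: there is nothing to compare. The paper does not prove this theorem; it is quoted verbatim from Baksalary and Kala (1979) and used as a black box in the subsequent DQR constructions. Your write-up therefore supplies a self-contained proof the paper omits, following essentially the same projector-based route as the original reference.
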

Theorem \ref{the: solution of Sylvester equation} constitutes a pivotal conclusion, holding significant relevance in the subsequent decomposition of dual matrices.

\section{QR Decomposition of Dual Matrices}\label{sec: QR Decomposition of Dual matrices}

In prior research \cite{wei2024singular}, the compact SVD decomposition of dual matrices is employed to perform a proper orthogonal decomposition of dual matrices for traveling wave identification.
This prompted our exploration of whether QR decomposition could be leveraged for proper orthogonal decomposition in the context of traveling wave identification.
In this section, we initially establish the existence of QR decomposition for dual matrices and propose an algorithm.
Subsequently, we present an algorithm for QR decomposition with column pivoting in the standard part of dual matrices.
We then extend our approach by introducing the concept of thin QR decomposition \cite{golub2013matrix} and presenting algorithms for both thin QR decomposition of dual matrices and thin QR decomposition with column pivoting in the standard part.
Furthermore, we harness the idea of randomized QR decomposition with column pivoting \cite{duersch2017randomized} to formulate an algorithm for randomized QR decomposition with column pivoting in the standard part of dual matrices.
The latter two categories of algorithms are specifically designed for addressing large-scale data scenarios where $m \gg n$, enables a proper orthogonal decomposition.

\subsection{QR Decomposition of Dual Matrices (DQR)}\label{sec: DQR}
\begin{theorem}\label{the: QR decomposition fo dual matrices}
    (DQR Decomposition) Given $A = A_{s}+A_{i}\epsilon \in \mathbb{DR}^{m \times n}$. Assume that $A_{s} = Q_{s}R_{s}$ is the QR decomposition of $A_{s}$, where $Q_{s} \in \mathbb{R}^{m \times m}$ is an orthogonal matrix and $R\in \mathbb{R}^{m \times n}$ is an upper triangular matrix. Then the QR decomposition of the dual matrix $A = QR$ where $Q = Q_{s}+Q_{i}\epsilon \in \mathbb{DR}^{m \times m}$ is an orthogonal dual matrix and $R = R_{s}+R_{i}\epsilon \in \mathbb{R}^{m \times n}$ is an upper triangular dual matrix exists. Additionally, the expressions of $Q_{i}$, $R_{i}$, the infinitesimal parts of $Q,R$, are
    \begin{equation}\label{equ: Qi=QsP}
        \begin{cases}
            Q_{i} & = Q_{s}P\\
            R_{i} & = Q_{s}^{\top}A_{i} -PR_{s},
        \end{cases}
    \end{equation}
    where $P\in\mathbb{R}^{m \times m}$ is a skew-symmetric matrix.

    Assuming $\rank(A_{s}) = K \leq \min(m,n)$ and $R_{s}
    = (r_{s_{ij}})\in \mathbb{R}^{m\times n}$,  we can derive the lower triangular portion's expression for $P$ as
    \begin{equation}\label{equ: p_{1} &= b_{1}/r_{s_{11}}}
        \begin{cases}
            p_{1}(2:m) &= b_{1}(2:m)/r_{s_{11}}, \\
            p_{2}(3:m) &= (b_{2}(3:m)-r_{s_{12}}p_{1}(3:m))/r_{s_{22}}, \\
            & \ldots \\
            p_{K}(K+1:m) &= (b_{K}(K+1:m)-\sum\limits_{t = 1}^{K-1}r_{s_{tK}}p_{t}
            (K+1:m))/r_{s_{KK}},
        \end{cases}
    \end{equation}
    where $Q_{s}^{\top}A_{i} = B = [b_{1},b_{2},\ldots,b_{n}]$ and the remaining lower triangular portion is entirely composed of zeros.
\end{theorem}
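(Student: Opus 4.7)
The plan is to substitute the ansatz $Q = Q_s + Q_i\epsilon$, $R = R_s + R_i\epsilon$ into $A = QR$, expand using $\epsilon^2 = 0$, and match standard and infinitesimal parts against $A = A_s + A_i\epsilon$. The standard part reproduces the assumed factorization $A_s = Q_sR_s$, while the infinitesimal part leaves a single matrix equation
\begin{equation*}
Q_sR_i + Q_iR_s = A_i
\end{equation*}
to be solved, subject to $Q$ being an orthogonal dual matrix and $R$ being upper triangular.

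Next, I would use orthogonality to eliminate $Q_i$ from the problem. By item~3 of Proposition~\ref{prop: dual properties}, orthogonality of $Q$ is equivalent to $Q_s$ being orthogonal (already given) together with $Q_s^\top Q_i + Q_i^\top Q_s = O_m$. Setting $P := Q_s^\top Q_i$, the skew-symmetry of $P$ is immediate, and multiplying on the left by $Q_s$ gives $Q_i = Q_sP$, the first line of the claimed formulas. Substituting this into the infinitesimal equation and left-multiplying by $Q_s^\top$ yields $R_i = Q_s^\top A_i - PR_s$, the second line. The existence of DQR is thereby reduced to choosing a skew-symmetric $P$ for which this $R_i$ is upper triangular.

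Writing $B := Q_s^\top A_i$, the upper-triangular constraint on $R_i$ becomes $(PR_s)_{ik} = b_{ik}$ for all $i > k$. Because $R_s$ is upper triangular,
\begin{equation*}
(PR_s)_{ik} = \sum_{t=1}^{k} p_{it}\, r_{s_{tk}},
\end{equation*}
so the only unknown in column $k$ is $p_{ik}$ itself, the terms with $t < k$ having been fixed while processing columns $1, \ldots, k-1$. Isolating $p_{ik}$ and dividing by the pivot $r_{s_{kk}}$ reproduces the recursion displayed in the theorem, which is well defined precisely because $r_{s_{kk}} \neq 0$ for $k = 1, \ldots, K = \rank(A_s)$. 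The strict-upper entries of $P$ are then determined by skew-symmetry $p_{ji} = -p_{ij}$, and the diagonal is zero.

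The main obstacle I expect is the rank-deficient regime $k > K$, where the pivots vanish and the recursion cannot be inverted; the theorem's convention is simply to set the remaining strictly lower entries of $P$ to zero. To justify this, I would have to verify that the corresponding entries of $B - PR_s$ are then automatically zero so that $R_i$ stays upper triangular, which is a genuine compatibility condition relating $A_i$ to the range of $A_s$ rather than an immediate consequence of $\rank(A_s) = K$ alone. The cleanest route is to recast the underlying linear problem in the Sylvester form $AX - YB = C$ and invoke Theorem~\ref{the: solution of Sylvester equation} to identify both the solvability criterion and the available freedom in $P$, which in turn confirms that the recursion-plus-zero prescription of the theorem delivers a legitimate DQR factorization.
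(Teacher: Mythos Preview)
Your proposal is correct and reaches exactly the same formulas as the paper, but the order of the argument differs. The paper leads with the Sylvester framework of Theorem~\ref{the: solution of Sylvester equation}: it casts $A_i = Q_sR_i + Q_iR_s$ as $Q_sX - Y(-R_s) = A_i$, checks that the solvability condition $(I_m - Q_sQ_s^\dagger)A_i(I_n - R_s^\dagger R_s) = O$ holds automatically because $Q_s$ is square orthogonal, writes out the general solution with free parameters $Z,W$, simplifies to $Q_i = Z$ and $R_i = Q_s^\top A_i - Q_s^\top Z R_s$, and only then imposes orthogonality to force $P := Q_s^\top Z$ skew-symmetric. You instead impose orthogonality first, read off $Q_i = Q_sP$ with $P$ skew directly from $Q_s^\top Q_i + Q_i^\top Q_s = O_m$, and substitute to get $R_i$---a shorter and more elementary derivation that avoids the auxiliary $Z,W$ altogether. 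The forward-substitution step for $P$ is identical in both. Your caution about the rank-deficient columns $k>K$ is well placed: the paper simply declares those lower-triangular entries of $P$ to be zero without explicitly verifying that $R_i$ remains upper triangular there, so your instinct to appeal to the Sylvester solvability criterion at that point is, if anything, more careful than the paper's own treatment.
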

\begin{proof}
    Assume the QR decomposition of $A$ exists, then we have that
    \begin{equation}\label{equ: As=QsRs Ai = QsRi + QiRs}
        \begin{cases}
            A_{s} = Q_{s}R_{s}\\
            A_{i} = Q_{s}R_{i}+Q_{i}R_{s}
        \end{cases}
    \end{equation}
    by simple computation.

    By treating $R_{i}$ as $X$ and $Q_{i}$ as $Y$, the given equation $A_{i} = Q_{s}R_{i}+Q_{i}R_{s}$ in (\ref{equ: As=QsRs Ai = QsRi + QiRs}) transforms into a Sylvester equation involving $Q_{i}$ and $R_{i}$. Leveraging  Theorem \ref{the: solution of Sylvester equation}, if and only if
    \begin{equation}\label{equ: (I-QQT)A(I-RR)}
        (I_{m}-Q_{s}Q_{s}^{\dagger})A_{i}(I_{m}-R_{s}^{\dagger}R_{s}) = O_{m\times n},
    \end{equation}
    which holds true identically by the orthogonal nature of $Q_{s}$ and the fact $Q_{s}^{\dagger} = Q_{s}^{\top}$, one can derive the solution
    \begin{equation}\label{equ: RiQi after Sylvester}
        \begin{cases}
            R_{i} &= Q_{s}^{\dagger}A_{i}+Q_{s}^{\dagger}Z(-R_{s})+(I_{m}-Q_{s}^{\dagger}Q_{s})W \\
            Q_{i} &= -(I_{m}-Q_{s}Q_{s}^{\dagger})A_{i}(-R_{s})^{\dagger}+Z-(I_{m}-Q_{s}Q_{s}^{\dagger})Z(-R_{s})(-R_{s})^{\dagger}
        \end{cases}
    \end{equation}
    where $Z \in \mathbb{R}^{m \times m}$ and $W \in \mathbb{R}^{m \times n}$ are
    arbitrary.

    Upon examining the first expression of $R_{i}$ in (\ref{equ: RiQi after Sylvester}), one can deduce, based on the established fact that $Q_{s}^{\dagger} = Q_{s}^{\top}$,
    \begin{equation}\label{equ: Ri expression}
        \begin{aligned}
            R_{i} & = Q_{s}^{\dagger}A_{i}+Q_{s}^{\dagger}Z(-R_{s})+(I_{m}-Q_{s}^{\dagger}Q_{s})W \\
            & = Q_{s}^{\top}A_{i}+Q_{s}^{\top}Z(-R_{s})+(I_{m}-Q_{s}^{\top}Q_{s})W \\
            & = Q_{s}^{\top}A_{i}-Q_{s}^{\top}ZR_{s}.
        \end{aligned}
    \end{equation}
    Likewise, drawing on the foundational premise of $Q_{s}^{\dagger} = Q_{s}^{\top}$
    and $I_{m}-Q_{s}Q_{s}^{\top} = O_{m }$, we can derive the second expression of $Q_{i}$ in (\ref{equ: RiQi after Sylvester}) into
    \begin{equation}\label{equ: Qi expression}
        \begin{aligned}
            Q_{i} & = -(I_{m}-Q_{s}Q_{s}^{\dagger})A_{i}(-R_{s})^{\dagger}+Z-(I_{m}-Q_{s}Q_{s}^{\dagger})Z(-R_{s})(-R_{s})^{\dagger}\\
            & = (I_{m}-Q_{s}Q_{s}^{\top})A_{i}R_{s}^{\dagger}+Z-(I_{m}-Q_{s}Q_{s}^{\top})ZR_{s}R_{s}^{\dagger} \\
            & = Z.
        \end{aligned}
    \end{equation}
    Subsequently, the determination of arbitrary values for $Z$ becomes imperative. Notably, the orthogonal nature of the dual matrix $Q$ and the upper triangular configuration of the dual matrix $R$ can be employed as constraining equations to ascertain the values of $Z$. Primarily, our scrutiny revolves around the orthogonal property of the dual matrix $Q$. This leads us to consider the following
    \begin{equation}\label{equ: Qi*Qs+Qs*Qi=0}
        Q_{i}^{\top}Q_{s}+Q_{s}^{\top}Q_{i} = O_{m}.
    \end{equation}
    Substituting (\ref{equ: Qi expression}) into the term $Q_{s}^{\top}Q_{i}$ in (\ref{equ: Qi*Qs+Qs*Qi=0}) yields
    \begin{equation}\label{equ: QsTQi = QsTZ}
            Q_{s}^{\top}Q_{i}  = Q_{s}^{\top}Z.
    \end{equation}
    Hence, (\ref{equ: Qi*Qs+Qs*Qi=0}) can be transformed into
    \begin{equation}\label{equ: ZTQs+QsTZ = O}
        Z^{\top}Q_{s} + Q_{s}^{\top}Z = O_{m}.
    \end{equation}
    Here, owing to the orthogonal and invertible nature of matrix $Q_{s}$, we can denote $P = Q_{s}^{\top}Z \in \mathbb{R}^{m \times m}$ (thus equivalently, $Z = Q_{s}P$, then the investigation into the values of $Z$ can be transmuted into an exploration of the values pertaining to $P$). According to (\ref{equ: QsTQi = QsTZ}), it follows that $P$ is a skew-symmetric matrix. Through the skew-symmetric matrix $P = Q_{s}^{\top}Z$ , we can attain expressions for $Q_{i}$ and $R_{i}$,
    \begin{equation}
        \begin{cases}
            Q_{i} & = Q_{s}P\\
            R_{i} & = Q_{s}^{\top}A_{i} -PR_{s}.
        \end{cases}
    \end{equation}

    Now, we must revisit the upper triangular property of the dual matrix $R$ to determine $P$. Upon substituting the expression $P = Q_{s}^{\top}Z$ into (\ref{equ: Ri expression}), we obtain
    \begin{equation}\label{equ: PRs=QsTAi-Ri}
        PR_{s} = Q_{s}^{\top}A_{i}-R_{i}.
    \end{equation}
    We denote $B = Q_{s}^{\top}A_{i} \in\mathbb{R}^{m \times n}$ and $B = [b_{1},b_{2},\ldots,b_{n}], R_{i} = [r_{i1},r_{i2},\ldots,r_{in}], R_{s} = [r_{s1},r_{s2},\ldots,r_{sn}]$ where $b_{k},r_{ik},r_{sk}\in \mathbb{R}^{m},k = 1,2,\ldots,n $. By partitioning the matrices column-wise, (\ref{equ: PRs=QsTAi-Ri}) is transformed into the following system of linear equations,
    \begin{equation}\label{equ: Prs1 = b1-ri1}
        \begin{cases}
            Pr_{s1} &= b_{1}-r_{i1}, \\
            Pr_{s2} &= b_{2}-r_{i2}, \\
            & \ldots \\
            Pr_{sk} &= b_{k}-r_{ik},\\
            & \ldots
        \end{cases}
    \end{equation}
    Exploiting the upper triangular nature of matrix $R_{s}$, along with  $R_{s}
     = (r_{s_{ij}})\in \mathbb{R}^{m\times n}$ and $P = [p_{1},p_{2},\ldots,p_{n}]$ where $p_{k}\in \mathbb{R}^{m},k = 1,2,\ldots, m$, the system (\ref{equ: Prs1 = b1-ri1}) can be transmuted into
    \begin{equation}\label{equ: rs11p1 = b1-ri1}
        \begin{cases}
            r_{s_{11}}p_{1} &= b_{1}-r_{i1}, \\
            r_{s_{12}}p_{1}+r_{s_{22}}p_{2} &= b_{2}-r_{i2}, \\
            & \ldots \\
            r_{s_{1k}}p_{1}+r_{s_{2k}}p_{2}+\ldots+r_{s_{kk}}p_{k} &= b_{k}-r_{ik},\\
            & \ldots
        \end{cases}
    \end{equation}
    Recognizing the upper triangular structure of $R_{i}$, it follows that for $r_{ik},k = 1,2,\ldots, n$, the lower $n-k$ entries $r_{ik}(k+1:m)$ are all zeros. Consequently, $p_{k}$ also requires computation solely for the lower $n-k$ entries $p_{k}(k+1:m)$, thereby ensuring the maintenance of $P$'s
    skew-symmetric property.

     Given $\rank(R_{s}) = \rank(A_{s}) =  K \leq \min(m,n)$, it follows that all the
     elements $r_{s_{ij}} = 0, i>K $. Hence, the system of $K$ equations has the solutions
    \begin{equation}
        \begin{cases}
            p_{1}(2:m) &= b_{1}(2:m)/r_{s_{11}}, \\
            p_{2}(3:m) &= (b_{2}(3:m)-r_{s_{12}}p_{1}(3:m))/r_{s_{22}}, \\
            & \ldots \\
            p_{K}(K+1:m) &= (b_{K}(K+1:m)-\sum\limits_{t = 1}^{K-1}r_{s_{tK}}p_{t}(K+1:m))/r_{s_{KK}}.
        \end{cases}
    \end{equation}
     Thus, we complete the proof.
\end{proof}

Theorem \ref{the: QR decomposition fo dual matrices} furnishes the proof of existence for the QR decomposition of the dual matrices, accompanied by a delineation of the construction process within the proof. In Algorithm \ref{alg: QR Decomposition of Dual Matrix}, we furnish pseudocode for the QR decomposition of the dual matrices.

\begin{algorithm}[htb]
\DontPrintSemicolon
    \KwInput{$A = A_{s}+A_{i}\epsilon \in \mathbb{DR}^{m \times n}$.}

     Compute the QR decomposition of $A_{s} = Q_{s}R_{s}(Q_{s}\in \mathbb{R}^{m \times m},R_{s}\in \mathbb{R}^{m \times n})$.

    Compute $K = \rank(R_{s})$.

    Set $P = zeros(m,m)$ and $B = Q_{s}^{\top}A_{i}$.

    Set $i = 1$.

    $P(2:m,1) = B(2:m,1)/R_{s}(1,1)$.

    \While{$i < K$}{

    $i = i+1$.

    $P(i+1:m,i) = (B(i+1:m,i)-\sum\limits_{t=1}^{i-1}R_{s}(t,i)P(i+1:m,t)/R_{s}(i,i).$

    $P(i,i+1:m) = -P(i+1:m,i)^{\top}$.

    }

    Set $Q_{i} = Q_{s}P$.

    Set $R_{i}  = B -PR_{s}$.

    \KwOutput{$Q = Q_{s}+Q_{i}\epsilon \in \mathbb{DR}^{m \times m}$ is an orthogonal dual matrix, and $R = R_{s}+R_{i}\epsilon \in \mathbb{DR}^{m \times n}$ is an upper triangular dual matrix.}
    \caption{QR Decomposition of Dual Matrices (DQR)}
    \label{alg: QR Decomposition of Dual Matrix}
\end{algorithm}

In practical traveling wave detection \cite{wei2024singular}, it becomes necessary to perform column pivoting in the standard components of the QR decomposition. Thus, suppose that the QR decomposition with column pivoting in the standard part of $A = A_{s}+A_{i}\epsilon \in \mathbb{DR}^{m \times n}$ exists, then we have
\begin{equation}\label{equ: AsPs=QsRs}
    \begin{cases}
        A_{s}P_{s} &= Q_{s}R_{s}\\
        A_{i}P_{s} &= Q_{s}R_{i}+Q_{i}R_{s}.
    \end{cases}
\end{equation}
Building upon (\ref{equ: AsPs=QsRs}), we can employ a similar approach as in Theorem \ref{the: QR decomposition fo dual matrices} to complete the proof. We omit the details here. Algorithm \ref{alg: QR decomposition with column pivoting in the standard part of Dual Matrix} presents the pseudocode for the QR decomposition with column pivoting in the standard part of the dual matrix (DQRCP).

\begin{algorithm}[htb]
\DontPrintSemicolon
    \KwInput{$A = A_{s}+A_{i}\epsilon \in \mathbb{DR}^{m \times n}$.}

     Compute the QR decomposition with column pivoting of $A_{s}P_{s} = Q_{s}R_{s}(Q_{s}\in \mathbb{R}^{m \times m},R_{s}\in \mathbb{R}^{m \times n}, P_{s} \in \mathbb{R}^{n \times n})$.

    Set $P = zeros(m,m)$ and $B = Q_{s}^{\top}A_{i}P_{s}$.

    Set $i = 1$.

    $P(2:m,1) = B(2:m,1)/R_{s}(1,1)$.

    \While{$i < K$}{

    $i = i+1$.

    $P(i+1:m,i) = (B(i+1:m,i)-\sum\limits_{t=1}^{i-1}R_{s}(t,i)P(i+1:m,t)/R_{s}(i,i)$.

    $P(i,i+1:m) = -P(i+1:m,i)^{\top}$.

    }

    Set $Q_{i} = Q_{s}P$.

    Set $R_{i}  = B -PR_{s}$.

    \KwOutput{$Q = Q_{s}+Q_{i}\epsilon \in \mathbb{DR}^{m \times m}$ is an orthogonal dual matrix, $R = R_{s}+R_{i}\epsilon \in \mathbb{DR}^{m \times n}$ is an upper triangular dual matrix, and $P_{s} \in \mathbb{R}^{n \times n}$ is a permutation matrix.}
    \caption{QR Decomposition with Column Pivoting in the standard part of Dual Matrices (DQRCP)}
    \label{alg: QR decomposition with column pivoting in the standard part of Dual Matrix}
\end{algorithm}

\subsection{Thin QR Decomposition of Dual Matrices (tDQR)}\label{sec: tDQR}

In Algorithm \ref{alg: QR Decomposition of Dual Matrix}, it is evident that while computing the QR decomposition of the dual matrix $A\in \mathbb{R}^{m \times n}$, in order to construct the skew-symmetric matrix $P$, a space of dimension $m^{2}$ is required. In the context of large-scale numerical computations, a large value of $m$ can have a detrimental impact on our calculations. Therefore, this necessitates our contemplation of thin QR decomposition of dual matrices. The concept of thin QR decomposition was initially introduced by Golub and Van Loan \cite{golub2013matrix}. For a matrix $A\in \mathbb{R}^{m \times n} (m \geq n)$ with full column rank, the thin QR decomposition $A = QR$ entails the unique existence of a matrix $Q\in \mathbb{R}^{m \times n}$ with columns orthogonality and an upper triangular matrix $R\in \mathbb{R}^{n \times n}$ with positive diagonal entries. Subsequently, we proceed to present the thin QR decomposition of the dual matrices (tDQR decomposition) based on this notion.

\begin{theorem}\label{the: thin QR decomposition fo dual matrices}
    (tDQR Decomposition) Given $A = A_{s}+A_{i}\epsilon \in \mathbb{DR}^{m \times n}(m \geq n)$ where $A_{s} \in \mathbb{R}^{m \times n}$ is full column rank. Assume that $A_{s} = Q_{s}R_{s}$ is the thin QR decomposition of $A_{s}$, where $Q_{s} \in \mathbb{R}^{m \times n}$ is a matrix with orthogonal columns and $R\in \mathbb{R}^{n \times n}$ is an upper triangular matrix with positive diagonal entries. Then the thin QR decomposition of the dual matrix $A = QR$ where $Q = Q_{s}+Q_{i}\epsilon \in \mathbb{DR}^{m \times n}$ is a dual matrix with orthogonal columns and $R = R_{s}+R_{i}\epsilon \in \mathbb{R}^{n \times n}$ is an upper triangular dual matrix exists. Additionally, the expressions of $Q_{i}$, $R_{i}$, the infinitesimal parts of $Q,R$, are
    \begin{equation}\label{equ: thin Qi Ri expression}
        \begin{cases}
            Q_{i} & = (I_{m}-Q_{s}Q_{s}^{\top})A_{i}R_{s}^{-1} + Q_{s}P \\
            R_{i}& = Q_{s}^{\top}A_{i}-PR_{s}
        \end{cases}
    \end{equation}
    where $P \in \mathbb{R}^{n \times n}$ is a skew-symmetric matrix.

    We can derive the lower triangular portion's expression for $P$ as
    \begin{equation}
        \begin{cases}
            p_{1}(2:n) &= b_{1}(2:n)/r_{s_{11}}, \\
            p_{2}(3:n) &= (b_{2}(3:n)-r_{s_{12}}p_{1}(3:n))/r_{s_{22}}, \\
            & \ldots \\
            p_{n-1}(n) &= (b_{n-1}(n)-\sum\limits_{t = 1}^{n-2}r_{s_{t(n-1)}}p_{t}(n))/r_{s_{(n-1)(n-1)}},
        \end{cases}
    \end{equation}
    where $Q_{s}^{\top}A_{i} = B = [b_{1},b_{2},\ldots,b_{n}]$.
\end{theorem}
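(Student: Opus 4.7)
The plan mirrors the argument of Theorem \ref{the: QR decomposition fo dual matrices}, adapting each step to the rectangular factor $Q_s \in \mathbb{R}^{m \times n}$. Assuming the thin DQR exists and matching the standard and infinitesimal parts of $A = QR$, the standard part recovers the given thin QR of $A_s$, while the infinitesimal part yields the Sylvester-type relation $A_i = Q_s R_i + Q_i R_s$ in the unknowns $Q_i$ and $R_i$.

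Next, I would invoke Theorem \ref{the: solution of Sylvester equation} with $Q_s$ playing the role of $A$ and $-R_s$ playing the role of $B$. Because $A_s$ has full column rank, $R_s$ is invertible, so $R_s^{\dagger}R_s = I_n$ and the solvability condition $(I_m - Q_s Q_s^{\dagger}) A_i (I_n - R_s^{\dagger} R_s) = O$ is automatic. Using $Q_s^{\dagger} = Q_s^{\top}$ and $R_s^{\dagger} = R_s^{-1}$, the $(I_n - Q_s^{\dagger} Q_s) W$ term drops out, and the general solution collapses to
\begin{align*}
R_i &= Q_s^{\top} A_i - Q_s^{\top} Z R_s, \\
Q_i &= (I_m - Q_s Q_s^{\top}) A_i R_s^{-1} + Q_s Q_s^{\top} Z,
\end{align*}
for an arbitrary $Z \in \mathbb{R}^{m \times n}$. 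The key new feature compared to Theorem \ref{the: QR decomposition fo dual matrices} is that $I_m - Q_s Q_s^{\top}$ no longer vanishes, so $Q_i$ acquires an inhomogeneous component lying in the orthogonal complement of the column space of $Q_s$. Introducing $P := Q_s^{\top} Z \in \mathbb{R}^{n \times n}$ reparameterizes the remaining freedom and produces the formulas displayed in \eqref{equ: thin Qi Ri expression}.

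Then I would impose the column-orthogonality constraint $Q^{\top} Q = I_n$, whose first-order component reads $Q_s^{\top} Q_i + Q_i^{\top} Q_s = O_n$. Substituting the expression for $Q_i$ and using $Q_s^{\top}(I_m - Q_s Q_s^{\top}) = 0$ annihilates the inhomogeneous term, reducing the constraint to $P + P^{\top} = O_n$. Hence $P$ must be skew-symmetric, which in particular forces its diagonal to vanish.

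Finally, to pin down $P$ I would enforce upper triangularity of $R_i$. Setting $B = Q_s^{\top} A_i$, the identity $P R_s = B - R_i$ forces the strictly lower-triangular entries of $PR_s$ to equal those of $B$. Because $R_s$ is upper triangular with invertible (positive) diagonal, equating these entries column by column yields exactly the recursion stated in the theorem for $p_k(k+1:n)$, and the skew-symmetric reflection then fills in the superdiagonal entries. The main delicate step is a bookkeeping check that the construction is neither underdetermined nor overdetermined: the upper-triangular entries of $R_i$ are free to absorb the corresponding entries of $B - PR_s$, leaving only $\binom{n}{2}$ subdiagonal equations to match the $\binom{n}{2}$ subdiagonal parameters of a skew-symmetric $P$, with invertibility of each $r_{s_{kk}}$---guaranteed by the full-column-rank hypothesis---ensuring that the recursion is well defined and unique.
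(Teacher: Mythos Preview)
Your proposal is correct and follows essentially the same route as the paper's proof: invoke Theorem~\ref{the: solution of Sylvester equation} to solve $A_i=Q_sR_i+Q_iR_s$, simplify via $Q_s^\dagger=Q_s^\top$ and $R_s^\dagger=R_s^{-1}$, set $P=Q_s^\top Z$, obtain skew-symmetry of $P$ from the column-orthogonality constraint using $Q_s^\top(I_m-Q_sQ_s^\top)=0$, and then determine the strictly lower part of $P$ column by column from the upper-triangularity of $R_i$. The only cosmetic difference is that the paper completes $Q_s$ to an orthogonal matrix $[Q_s,\widehat{Q}_s]$ to argue that $P=Q_s^\top Z$ is a genuine free parameter in $\mathbb{R}^{n\times n}$, whereas you pass to $P$ directly; both are fine since $Q_s^\top$ is surjective.
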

The proof can be found in the Appendix \ref{sec: appendix}.

Theorem \ref{the: thin QR decomposition fo dual matrices} provides the thin QR decomposition of dual matrices (tDQR), offering an efficient alternative for the QR decomposition of dual matrices (DQR) when dealing with large-scale problems where $m \gg n$. Algorithm \ref{alg: thin QR Decomposition of Dual Matrix} furnishes pseudocode for the tDQR.
\begin{algorithm}[htb]
\DontPrintSemicolon
    \KwInput{$A = A_{s}+A_{i}\epsilon \in \mathbb{DR}^{m \times n}$ where $A_{s}$ is of full column rank.}

     Compute the thin QR decomposition of $A_{s} = Q_{s}R_{s}(Q_{s}\in \mathbb{R}^{m \times n},R_{s}\in \mathbb{R}^{n \times n})$.

    Set $P = zeros(n,n)$ and $B = Q_{s}^{\top}A_{i}$.

    Set $i = 1$.

    $P(2:n,1) = B(2:n,1)/R_{s}(1,1)$.

    \While{$i < n-1 $}{

    $i = i+1$.

    $P(i+1:n,i) = (B(i+1:n,i)-\sum\limits_{t=1}^{i-1}R_{s}(t,i)P(i+1:n,t)/R_{s}(i,i)$.

    $P(i,i+1:n) = -P(i+1:n,i)^{\top}$.

    }

    Set $Q_{i} = (I_{m}-Q_{s}Q_{s}^{\top})A_{i}R_{s}^{-1} + Q_{s}P$.

    Set $R_{i}  = B -PR_{s}$.

    \KwOutput{$Q = Q_{s}+Q_{i}\epsilon \in \mathbb{DR}^{m \times n}$ is a dual matrix with orthogonal columns, and $R = R_{s}+R_{i}\epsilon \in \mathbb{DR}^{n \times n}$ is an upper triangular dual matrix.}
    \caption{Thin QR Decomposition of Dual Matrices (tDQR)}
    \label{alg: thin QR Decomposition of Dual Matrix}
\end{algorithm}

Similar to Algorithm \ref{alg: QR decomposition with column pivoting in the standard part of Dual Matrix}, we can derive an algorithm for the thin QR decomposition with column pivoting in the standard part of the dual matrix based on analogous reasoning. Algorithm \ref{alg: thin QR decomposition with column pivoting in the standard part of Dual Matrix} presents the pseudocode for the thin QR decomposition with column pivoting in the standard part of the dual matrix (tDQRCP).

\begin{algorithm}[htb]
\DontPrintSemicolon
    \KwInput{$A = A_{s}+A_{i}\epsilon \in \mathbb{DR}^{m \times n}$, where $A_{s}$ is of full column rank.}

     Compute the thin QR decomposition with column pivoting of $A_{s}P_{s} = Q_{s}R_{s}(Q_{s}\in \mathbb{R}^{m \times n},R_{s}\in \mathbb{R}^{n \times n}, P_{s} \in \mathbb{R}^{n \times n})$.

   Set $P = zeros(n,n)$ and $B = Q_{s}^{\top}A_{i}P_{s}$.

    Set $i = 1$.

    $P(2:n,1) = B(2:n,1)/R_{s}(1,1)$.

    \While{$i < n-1 $}{

    $i = i+1$.

    $P(i+1:n,i) = (B(i+1:n,i)-\sum\limits_{t=1}^{i-1}R_{s}(t,i)P(i+1:n,t)/R_{s}(i,i)$.

    $P(i,i+1:n) = -P(i+1:n,i)^{\top}$.

    }

    Set $Q_{i} = (I_{m}-Q_{s}Q_{s}^{\top})A_{i}P_{s}R_{s}^{-1} + Q_{s}P$.

    Set $R_{i}  = B -PR_{s}$.

    \KwOutput{$Q = Q_{s}+Q_{i}\epsilon \in \mathbb{DR}^{m \times m}$ is a dual matrix with orthogonal columns, $R = R_{s}+R_{i}\epsilon \in \mathbb{DR}^{m \times n}$ is an upper triangular dual matrix, and $P_{s} \in \mathbb{R}^{n \times n}$ is a permutation matrix.}
    \caption{Thin QR Decomposition with Column Pivoting in the standard part of Dual Matrices (tDQRCP)}
    \label{alg: thin QR decomposition with column pivoting in the standard part of Dual Matrix}
\end{algorithm}

\subsection{Randomized QR Decomposition with Column Pivoting of Dual Matrices (RDQRCP)}\label{sec: Randomized QR Decomposition with Column Pivoting of Dual Matrices}

In practical applications utilizing QR decomposition with column pivoting, it is often sufficient to compute only the first $k$ rows of $R$. Given an $m \times  n$ matrix $A$, we seek a truncated QR decomposition for $k \leq \min(m,n)$
\begin{equation}\label{equ: AP approx QR}
    AP \approx QR,
\end{equation}
where $Q \in \mathbb{R}^{m \times k}$ is
orthonormal, $P\in \mathbb{R}^{n \times n}$ is a permutation, and $R \in \mathbb{R}^{k \times n}$ is upper triangular. Addressing this concern, Duersch and Gu devised a randomized algorithm for QR decomposition with column pivoting \cite{duersch2017randomized}. This truncated QR decomposition employing randomized algorithms proves to be highly effective for numerous low-rank matrix approximation techniques. According to the results of the Johnson-Lindenstrauss lemma \cite{wb1982extensions}, randomized sampling algorithms maintain a high probability guarantee of approximation error while simultaneously reducing communication complexity through dimensionality reduction. The Johnson-Lindenstrauss lemma implies that let $a_{j}$ represent the $j$-th column ($j = 1,2,\ldots,n$) of $A$.  We construct the sample columns $b_{j} = \Omega a_{j}$ by a Gaussian Independent Identically Distributed (GIID) compression matrix $\Omega \in \mathbb{R}^{l\times m}$. The 2-norm expectation values and variance of $b_{j}$ are,
\begin{equation}\label{equ: Ebj=laj}
    \mathbb{E}(\|b_{j}\|_{2}^{2}) = l\|a_{j}\|_{2}^{2} \text{ and } \mathbb{V}(\|b_{j}\|_{2}^{2}) = 2l\|a_{j}\|_{2}^{4}
\end{equation}
Moreover, the probability of successfully detecting all column norms as well as all distances between columns within a relative error $\tau$ for $0 < \tau < \frac{1}{2}$ is bounded by
\begin{equation}\label{equ: Pbj-bi/aj-ai}
    P\left(\left| \frac{\|b_{j}-b_{i}\|_{2}^{2}}{l\|a_{j}-a_{i}\|_{2}^{2}} -1 \right|\leq \tau\right)\geq 1-2e^{\frac{-l\tau^{2}}{4}(1-\tau)}.
\end{equation}
We can select the column with the largest approximate norm by using the sample matrix $B = \Omega A$. The remaining columns can be processed as represented in Algorithm \ref{alg: Single-Sample Randomized QRCP}, with detailed proofs available in the work by Duersch and Gu \cite{duersch2017randomized}.

\begin{algorithm}[htb]
\DontPrintSemicolon
    \KwInput{$A  \in \mathbb{R}^{m \times n}$, $k \ll \min(m,n)$ the desired approximation rank.}

     Set sample rank $l = k + p$ needed for acceptable sample error.

     Generate random $l \times m$ GIID compression matrix $\Omega$ and form the sample $B=\Omega A$.

     Get the QR decomposition with column pivoting of the sample, $BP = Q_{b}R_{b}(Q_{b}\in \mathbb{R}^{l \times l}, R_{b}\in \mathbb{R}^{l \times n}, P\in \mathbb{R}^{n \times n})$.

    Apply permutation $A^{(1)} = AP $.

    Compute the thin QR decomposition of $A^{(1)}(:,1:k) = QR_{11} (Q\in \mathbb{R}^{m \times k}, R_{11}\in \mathbb{R}^{k \times k})$.

    Finish $k$ rows of $R$ in remaining columns, $R_{12} = Q^{\top}(:,1:k)A^{(1)}(:,k+1:end)$.

    \KwOutput{$Q\in \mathbb{R}^{m\times k}$ is a matrix with orthogonal columns, $R \in \mathbb{R}^{k \times n}$ is a truncated upper trapezoidal matrix, and $P\in \mathbb{R}^{n \times n}$ is a permutation matrix.}
    \caption{Single-Sample Randomized QRCP (RQRCP)}
    \label{alg: Single-Sample Randomized QRCP}
\end{algorithm}
Building upon the randomized QR decomposition with column pivoting for (\ref{equ: AP approx QR}) as illustrated in Algorithm \ref{alg: Single-Sample Randomized QRCP}, we present a randomized QR decomposition with column pivoting in the standard part of dual matrices.
\begin{theorem}\label{the: RDQRCP}
    (RDQRCP Decomposition) Given $A = A_{s}+A_{i}\epsilon \in \mathbb{DR}^{m \times n}(m \geq n)$. Assume that $A_{s}P_{s} = Q_{s}R_{s}$ is the randomized QR decomposition with column pivoting of $A_{s}$, where $k \leq \min(m,n)$, $Q_{s} \in \mathbb{R}^{m \times k}$ is a matrix with orthogonal columns, $R\in \mathbb{R}^{k \times n}$ is a truncated upper trapezoidal matrix and $P_{s}\in \mathbb{R}^{n \times n}$ is a permutation matrix and $\rank(A_{s})\geq k$. Then the randomized QR decomposition with column pivoting in the standard part of the dual matrix $AP_{s} = QR$ where $Q = Q_{s}+Q_{i}\epsilon \in \mathbb{DR}^{m \times k}$ is a dual matrix with orthogonal columns and $R = R_{s}+R_{i}\epsilon \in \mathbb{R}^{k \times n}$ is a truncated upper trapezoidal dual matrix exists if and only if
    \begin{equation}\label{equ: (I-QsQsT)AiP_s(I-RsRs)}
        (I_{m}-Q_{s}Q_{s}^{\top})A_{i}P_{s}(I_{n}-R_{s}^{\dagger}R_{s}) = O_{m\times n}.
    \end{equation}
    Additionally, the expressions of $Q_{i}$, $R_{i}$, the infinitesimal parts of $Q,R$, are
    \begin{equation}\label{equ: rand Qi Ri expression}
        \begin{cases}
            Q_{i} & = (I_{m}-Q_{s}Q_{s}^{\top})A_{i}P_{s}R_{s}^{\dagger} + Q_{s}P \\
            R_{i}& = Q_{s}^{\top}A_{i}P_{s}-PR_{s},
        \end{cases}
    \end{equation}
    where $P\in \mathbb{R}^{k \times k}$ is a skew-symmetric matrix.

    We can derive the lower triangular portion's expression for $P$ as
    \begin{equation}
        \begin{cases}
            p_{1}(2:k) &= b_{1}(2:k)/r_{s_{11}}, \\
            p_{2}(3:k) &= (b_{2}(3:k)-r_{s_{12}}p_{1}(3:k))/r_{s_{22}}, \\
            & \ldots \\
            p_{k-1}(k) &= (b_{k-1}(k)-\sum\limits_{t = 1}^{k-2}r_{s_{t(k-1)}}p_{t}(k))/r_{s_{(k-1)
            	(k -1)}},
        \end{cases}
    \end{equation}
    where $Q_{s}^{\top}A_{i}P_{s} = B = [b_{1},b_{2},\ldots,b_{k},\ldots,b_{n}]$.
\end{theorem}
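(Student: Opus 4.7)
The proof should track the same Sylvester-equation template used in Theorem \ref{the: QR decomposition fo dual matrices} and Theorem \ref{the: thin QR decomposition fo dual matrices}, adapted to the truncated, rank-$k$ setting. The plan is to assume $AP_s = QR$ exists and expand in $\epsilon$: the standard part recovers $A_s P_s = Q_s R_s$, while the infinitesimal part yields $Q_s R_i + Q_i R_s = A_i P_s$. Rewriting this as $Q_s R_i - (-Q_i) R_s = A_i P_s$ puts it into the form (\ref{equ: Ax-YB =C}), so Theorem \ref{the: solution of Sylvester equation} applies with the roles $A \!\leftarrow\! Q_s$, $B \!\leftarrow\! -R_s$, $C \!\leftarrow\! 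A_i P_s$. Since $Q_s$ has orthonormal columns, $Q_s^\dagger = Q_s^\top$, and the consistency condition of that theorem becomes exactly $(I_m - Q_s Q_s^\top) A_i P_s (I_n - R_s^\dagger R_s) = O_{m\times n}$, which is (\ref{equ: (I-QsQsT)AiP_s(I-RsRs)}). This is the necessary and sufficient criterion for existence.

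Given the compatibility condition, the general Sylvester solution reads
\begin{equation*}
    \begin{cases}
        R_i = Q_s^\top A_i P_s - Q_s^\top Z R_s + (I_k - Q_s^\top Q_s) W, \\
        Q_i = (I_m - Q_s Q_s^\top) A_i P_s R_s^\dagger + Z - (I_m - Q_s Q_s^\top) Z R_s R_s^\dagger,
    \end{cases}
\end{equation*}
for arbitrary $Z \in \mathbb{R}^{m\times m}$, $W \in \mathbb{R}^{k \times n}$. Using $Q_s^\top Q_s = I_k$ kills the $W$-term and simplifies $R_i$ to $Q_s^\top A_i P_s - Q_s^\top Z R_s$. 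I would then impose the orthogonal-columns constraint $Q^\top Q = I_k$, which at order $\epsilon$ reads $Q_s^\top Q_i + Q_i^\top Q_s = O_k$. A short computation using $Q_s^\top(I_m - Q_s Q_s^\top) = 0$ shows $Q_s^\top Q_i = Q_s^\top Z$, so defining $P := Q_s^\top Z \in \mathbb{R}^{k\times k}$ forces $P$ to be skew-symmetric. Selecting the canonical lift $Z = Q_s P$ (which indeed satisfies $Q_s^\top Z = P$) and noting $(I_m - Q_s Q_s^\top) Q_s = 0$ collapses the $Q_i$ expression to $(I_m - Q_s Q_s^\top) A_i P_s R_s^\dagger + Q_s P$, matching (\ref{equ: rand Qi Ri expression}).

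It remains to pin down $P$ from the requirement that $R_i$ be upper trapezoidal. Substituting $Z = Q_s P$ into $R_i$ gives $R_i = B - P R_s$ with $B = Q_s^\top A_i P_s$. Partitioning columnwise and using the upper-trapezoidal structure of $R_s$ (with nonzero diagonal entries $r_{s_{11}}, \dots, r_{s_{(k-1)(k-1)}}$ inherited from column pivoting on a standard part of rank $\geq k$) produces a triangular cascade identical in shape to (\ref{equ: rs11p1 = b1-ri1}); solving it column by column by forward substitution yields the stated recursion for $p_j(j+1:k)$, with the skew-symmetric completion filling in the upper portion.

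The main obstacle, and the reason this statement is more delicate than its DQR and tDQR counterparts, is that the Sylvester consistency condition is no longer automatic. In the square ($Q_s Q_s^\top = I_m$) and full-column-rank thin ($R_s^\dagger R_s = I_n$) settings the projector on one side of (\ref{equ: (I-QQT)A(I-RR)}) vanishes, but in the truncated rank-$k$ regime both $I_m - Q_s Q_s^\top$ and $I_n - R_s^\dagger R_s$ are generically nonzero, so the condition must be stated as a genuine solvability hypothesis and verified (or enforced, e.g., by choosing $k$ large enough that $A_i P_s$ lies in the appropriate range/co-range). Once that hypothesis is granted, the rest of the argument is a routine transcription of the Sylvester-based derivation given for Theorem \ref{the: thin QR decomposition fo dual matrices}.
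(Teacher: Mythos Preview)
Your proposal is correct and follows essentially the same Sylvester-equation approach as the paper's proof: reduce the $\epsilon$-part to $Q_s R_i + Q_i R_s = A_i P_s$, invoke Theorem~\ref{the: solution of Sylvester equation} to obtain the solvability condition and general solution, impose column orthogonality of $Q$ to force $P = Q_s^\top Z$ skew-symmetric, and then determine $P$ by forward substitution from the upper-trapezoidal constraint on $R_i$. One small slip: the free parameter $Z$ lives in $\mathbb{R}^{m\times k}$, not $\mathbb{R}^{m\times m}$; the paper handles the freedom in $Z$ via an orthogonal-complement decomposition $Z = Q_s P + \widehat{Q}_s \widehat{P}$ rather than your canonical lift $Z = Q_s P$, but both routes lead to the same formulas since (using $R_s R_s^{\dagger} = I_k$) only $Q_s^\top Z$ enters the final expressions.
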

The proof can be found in the Appendix \ref{sec: appendix}.

Theorem \ref{the: RDQRCP} provides the existence of a randomized QR decomposition with column pivoting in the standard part of dual matrices. Algorithm \ref{alg: RDQRCP} presents its pseudocode.
\begin{algorithm}[htb]
\DontPrintSemicolon
    \KwInput{$A = A_{s}+A_{i}\epsilon \in \mathbb{DR}^{m \times n}$}

     Compute the RQRCP of $A_{s}P_{s} = Q_{s}R_{s}$ in Algorithm \ref{alg: Single-Sample Randomized QRCP} $(Q_{s}\in \mathbb{R}^{m \times k},R_{s}\in \mathbb{R}^{k \times n}, P_{s} \in \mathbb{R}^{n \times n})$.

    \While{$(I_{m}-Q_{s}Q_{s}^{\top})A_{i}P_{s}(I_{n}-R_{s}^{\dagger}R_{s}) = O_{m\times n}$}{

        Set $P = zeros(k,k)$ and $B = Q_{s}^{\top}A_{i}P_{s}$.

        Set $i = 1$.

        $P(2:k,1) = B(2:k,1)/R_{s}(1,1)$.

        \While{$i < k-1 $}{

            $i = i+1$.

            $P(i+1:k,i) = (B(i+1:k,i)-\sum\limits_{t=1}^{i-1}R_{s}(t,i)P(i+1:k,t)/R_{s}(i,i)$.

            $P(i,i+1:k) = -P(i+1:k,i)^{\top}$.

        }

        Set $Q_{i} = (I_{m}-Q_{s}Q_{s}^{\top})A_{i}P_{s}R_{s}^{\dagger} + Q_{s}P$.

        Set $R_{i}  = B -PR_{s}$.

    }

    \KwOutput{$Q = Q_{s}+Q_{i}\epsilon \in \mathbb{DR}^{m \times k}$ is a dual matrix with orthogonal columns, $R = R_{s}+R_{i}\epsilon \in \mathbb{DR}^{k \times n}$ is an upper trapezoidal  dual matrix, and $P_{s} \in \mathbb{R}^{n \times n}$ is a permutation matrix.}
    \caption{Randomized QR Decomposition with Column Pivoting in the standard part of Dual Matrices (RDQRCP)}
    \label{alg: RDQRCP}
\end{algorithm}

\section{Perturbation Analysis of the Q-factor}\label{sec: Perturbation Analysi}
There has been extensive research \cite{stewart1993perturbation, sun1995perturbation} on perturbation analysis for the thin QR decomposition of a matrix $A\in\mathbb{R}^{m \times n}$ with full column rank, where $A = QR, Q\in \mathbb{R}^{m \times n}$ and $R \in \mathbb{R}^{n \times n}$.
Suppose that $\Delta A \in \mathbb{R}^{m \times n}$ is a small perturbation matrix such that $A + \Delta A$ has full column rank, then $A + \Delta A$ has a unique QR factorization
\begin{equation}\label{equ: A+dA = Q+dQ R+dR}
    A + \Delta A = (Q + \Delta Q)(R + \Delta R),
\end{equation}
where $Q + \Delta Q$ has orthonormal columns and $R + \Delta R$ is upper triangular with positive diagonal entries.
Perturbation analysis for the Q-factor in the QR factorization has received extensive attention in the literature \cite{stewart1993perturbation, sun1995perturbation}.
The infinitesimal part within dual numbers possesses highly distinctive properties
\begin{equation}
    \epsilon^{2} = 0, \qquad   (\epsilon \neq 0).
\end{equation}
Hence, if we view the decomposition of dual number matrices as the decomposition of matrices with perturbed terms, then what we obtain is an accurate first-order approximation.
We substitute $A = A_{s}+\epsilon A_{i}\in \mathbb{DR}^{m \times n}$ for $A+\Delta A$ in (\ref{equ: A+dA = Q+dQ R+dR}), $Q = Q_{s}+\epsilon Q_{i}\in \mathbb{DR}^{m \times n}$ for $Q+\Delta Q$, and $R = R_{s}+\epsilon R_{i}\in \mathbb{DR}^{n \times n}$ for $R+\Delta R$.
Hence, using dual numbers, we can obtain the explicitly accurate first-order perturbation \cite{Greenbaum2020first} of the QR factorization,
\begin{equation}\label{equ: As+Ai = (Qs+Qi)(Rs+Ri)}
    A_{s}+\epsilon A_{i} = (Q_{s}+\epsilon Q_{i})(R_{s}+\epsilon R_{i}).
\end{equation}
This allows us to view the QR decomposition of dual matrices from the perspective of
perturbation analysis.
For the three types of perturbation analyses previously mentioned and given conditions, it is evident that we can satisfy all of them.
Stewart \cite{stewart1993perturbation} initially provided perturbation bounds for the Q-factor in QR decomposition under componentwise computation,
\begin{equation}\label{equ: QiF <= 1+sqrt2AsAi}
    \|Q_{i}\|_{F} \lesssim (1+\sqrt{2})\|A_{s}^{\dagger}\|_{2}\|A_{i}\|_{F}.
\end{equation}
Subsequently, Sun \cite{sun1995perturbation} improved rigorous perturbation bounds for the Q- factor $Q_{i}$ when $A_{i}$ is given,
\begin{equation}\label{equ: QiF <= sqrt2AsAi}
    \|Q_{i}\|_{F} \lesssim \sqrt{2}\|A_{s}^{\dagger}\|_{2}\|A_{i}\|_{F}.
\end{equation}
Now, we utilize the examples  in \cite{sun1995perturbation} to  test whether our tDQR algorithm satisfies the perturbation upper bounds as indicated in (\ref{equ: QiF <= sqrt2AsAi}). Let
    \begin{equation}\label{equ: example_sun}
        A_{s} = \begin{bmatrix}
            1&-2&1&2&3\\
            0&2&4&1&-5\\
            0&0&3&-1&2\\
            0&0&0&4&1\\
            0&0&0&0&5\\
            0&0&0&0&0\\
            0&0&0&0&0\\
            0&0&0&0&0
        \end{bmatrix}, \quad  A_{i} = \tau\begin{bmatrix}
            0.2&-0.5&0.3&0.1&0.4\\
            -0.1&0.4&0.1&-0.3&0.2\\
            0.5&0.7&-0.2&0.1&0.6\\
            0.3&-0.6&0.1&-0.1&0.2\\
            0.2&0.1&0.7&0.3&-0.4\\
            0.4&0.8&-0.2&0.1&0.3\\
            0.6&-0.1&-0.5&0.1&-0.2\\
            0.1&-0.3&0.2&0.6&0.7
        \end{bmatrix},
    \end{equation}
where $\tau > 0 $.
Table \ref{tab: Perturbation Errors in tDQR Decomposition and tQR Decomposition } illustrates the perturbation errors obtained through the MATLAB QR algorithm and our tDQR decomposition in Algorithm \ref{alg: thin QR Decomposition of Dual Matrix} for different small values of $\tau$.
Clearly, through numerical experiments, we can observe that the perturbation errors obtained using dual matrices are smaller and adhere to the perturbation bounds given by (\ref{equ: QiF <= sqrt2AsAi}).
This further supports the notion that dual matrix QR decomposition is an explicitly accurate first-order approximation with improved orthogonality.
\begin{table}[htb]
\caption{Perturbation Errors in tDQR Decomposition and QR Decomposition}
\label{tab: Perturbation Errors in tDQR Decomposition and tQR Decomposition }
\centering
\begin{tabular}{|c|c|c|c|c|}
\hline
$\tau$                                         & 1e-1       & 1e-2       & 1e-5           & 1e-8           \\ \hline
$\|A_{i}\|_{F}$                                & 0.24310492 & 0.02431049 & 2.43104916e-05 & 2.43104915e-08 \\ \hline
$\|\Delta Q\|_{F}$                             & 0.33558049 & 0.03162668 & 3.13819444e-05 & 3.13816983e-08 \\ \hline
$\|Q_{i}\|_{F}$                                & 0.31381698 & 0.03138170 & 3.13816980e-05 & 3.13816980e-08 \\ \hline
$\sqrt{2}\|A_{s}^{\dagger}\|_{2}\|A_{i}\|_{F}$ & 1.04034046 & 0.10403405 & 1.04034046e-04 & 1.04034045e-07 \\ \hline
\end{tabular}
\end{table}

\section{Numerical Experiments}\label{sec: Numerical Exiperment}
In this section, we compare the various proposed QR algorithms for the dual matrix and assess their respective suitability for different scenarios.
Next, we utilize the QR decomposition of dual matrices to construct and prove the existence of dual Moore-Penrose generalized inverse (DMPGI).
Numerical examples demonstrate that our approach achieves higher precision compared to previous methods.
Subsequently, we verify the effectiveness of QR algorithms for the dual matrix in traveling wave detection for composite waves.
Finally, we apply the proposed QR algorithm of the dual matrix to traveling wave detection in large-scale brain functional magnetic resonance imaging (fMRI) data in the field of neurosciences, significantly enhancing the efficiency relative to previous algorithms.
The code was implemented using Matlab R2023b and all experiments were conducted on a personal computer equipped with an M2 pro Apple processor and 16GB of RAM, running macOS Ventura 13.3.

\subsection{Numerical Comparison}\label{sec: Numerical Comparsion}
In this section, we compare the runtime of the DQR, the DQRCP, the tDQ, and the tDQRCP decompositions of a dual matrix $A\in \mathbb{DR}^{m \times n}$ under various scenarios where $m > n, m = n$, and $m < n$, to assess their performance at different scales.
We construct a GIID dual matrix $A$ as the synthetic data for evaluating and comparing.
The assessment criteria comprise runtime measurements due to the absence of uniform norms, rendering computational error comparisons challenging. To ensure the reliability of our findings, each algorithm is executed 10 times, resulting in robust results.

\begin{table}[htb]
\caption{Comparison between DQR decomposition and DQRCP decomposition}
\label{tab: Comparison between DQR DQRCP }
\centering
\begin{tabular}{|c|c|c|}
\hline
\diagbox{Size}{Time(s)}{Algorithm}                      & DQR               & DQRCP             \\ \hline
$500 \times 500$    & 0.1267(1.1504e-4) & 0.1344(3.5918e-5) \\ \hline
$500 \times 1000$   & 0.1456(2.3226e-4) & 0.1748(1.9633e-4) \\ \hline
$1000 \times 500$   & 0.2727(0.0011)    & 0.2815(6.240e-4)  \\ \hline
$2500 \times 2500$  & 5.7008(0.1042)    & 5.7608(0.0054)    \\ \hline
$2500 \times 5000$  & 6.9206(0.0210)    & 8.0753(0.0622)    \\ \hline
$5000 \times 2500$  & 16.6889(0.4629)   & 17.4049(0.1718)   \\ \hline
$4000 \times 4000$  & 19.1934(1.4439)   & 22.3248(2.2537)   \\ \hline
$4000 \times 8000$  & 21.4972(0.6180)   & 28.0318(0.6010)   \\ \hline
$8000 \times 4000$  & 70.5938(0.9510)   & 73.6737(0.9375)   \\ \hline
$5000 \times 5000$  & 33.3352(1.4165)   & 38.3178(1.2820)   \\ \hline
$5000 \times 10000$ & 38.3174(2.1581)   & 51.5523(1.2879)   \\ \hline
$10000 \times 5000$ & 123.6001(18.5784) & 133.7725(15.5367) \\ \hline
\end{tabular}
\end{table}

\begin{table}[htb]
\caption{Comparison between tDQR decomposition and tDQRCP decomposition}
\label{tab: Comparison between thin DQR DQRCP }
\centering
\begin{tabular}{|c|c|c|}
\hline
\diagbox{Size}{Time(s)}{Algorithm}                     & tDQR              & tDQRCP            \\ \hline
$500 \times 500$    & 0.1319(2.9287e-4) & 0.1422(6.0144e-5) \\ \hline
$500 \times 1000$   & 1.9193(0.0134)    & 2.9907(0.0430)    \\ \hline
$1000 \times 500$   & 0.1849(7.980e-4)  & 0.1808(3.636e-4)  \\ \hline
$2500 \times 2500$  & 5.7371(0.0094)    & 6.0572(0.0162)    \\ \hline
$2500 \times 5000$  & 18.1065(4.5056)   & 20.0397(1.7046)   \\ \hline
$5000 \times 2500$  & 8.3085(0.1454)    & 8.6551(0.1427)    \\ \hline
$4000 \times 4000$  & 20.4515(1.1058)   & 22.6906(1.5815)   \\ \hline
$4000 \times 8000$  & 62.7255(38.3143)  & 67.9553(20.5681)  \\ \hline
$8000 \times 4000$  & 28.2839(1.1321)   & 31.9453(0.9432)   \\ \hline
$5000 \times 5000$  & 34.1231(3.4861)   & 39.2982(1.8496)   \\ \hline
$5000 \times 10000$ & 109.0411(22.0643) & 120.6198(20.0132) \\ \hline
$10000 \times 5000$ & 48.1549(7.5332)   & 55.9513(6.7111)   \\ \hline
\end{tabular}
\end{table}

Tables \ref{tab: Comparison between DQR DQRCP }  and \ref{tab: Comparison between thin DQR DQRCP } present a time comparison for the DQR, the DQRCP, the tDQR, and the tDQRCP decompositions for dual matrices of varying dimensions, with variance values provided in parentheses.
It is evident that QR decompositions with column pivoting consistently exhibit slower runtime than those without column pivoting.
However, they offer enhanced numerical stability.
When considering dual matrices of size $m \times  n$, we observe that for $m > n$, both DQR decomposition and DQRCP decomposition outperform the tDQR decomposition and the tDQRCP decomposition in terms of speed.
Conversely, when $m < n$, the situation is reversed, thus confirming our hypothesis.
When $m = n$, it is noticeable that (\ref{equ: Qi=QsP}) lacks the presence of the term $(I_{m}-Q_{s}Q_{s}^{\top})A_{i}R_{s}^{-1}$ compared to (\ref{equ: thin Qi Ri expression}).
However, in cases where $m$ and $n$ are equal, and the dual matrix has full column rank, then $(I_{m}-Q_{s}Q_{s}^{\top})A_{i}R_{s}^{-1} = O_{m \times n}$.
Nonetheless, thin QR decomposition still computes this term, leading to increased runtime.

Next, we aim to construct a low-rank dual matrix $A \in \mathbb{DR}^{m \times n}$,
where $m > n$. Let $L$ be a GIID $m \times r$ dual matrix, and $R$ be a GIID $r\times n$ dual matrix, where $r = round(n/10)$.
For the synthetic data $A = LR$ that we have constructed, we compare the efficiency of the low-rank decomposition using the  tDQRCP decomposition and the RDQRCP decomposition, with the target rank $k$ set to 10.
To ensure the reliability of our findings, each algorithm is executed 50 times, resulting in robust results.

\begin{table}[htb]
\caption{Comparison between tDQRCP decomposition and RDQRCP decomposition}
\label{tab: Comparison between tDQRCR RDQRCP }
\centering
\begin{tabular}{|c|c|c|}
\hline
\diagbox{Size}{Time(s)}{Algorithm}                    & tDQRCP            & RDQRCP            \\ \hline
$1000 \times 200$  & 0.0546(2.1961e-4) & 0.0100(3.2969e-5) \\ \hline
$2000 \times 400$  & 0.2088(0.0017)    & 0.0386(3.0649e-4) \\ \hline
$4000 \times 1000$ & 1.4337(0.0114)    & 0.1557(2.1831e-4) \\ \hline
$8000 \times 2000$ & 7.3579(0.0973)    & 0.6936(0.0058)    \\ \hline
\end{tabular}
\end{table}

Table \ref{tab: Comparison between tDQRCR RDQRCP } presents a time comparison for the tDQRCP decomposition and the RDQRCP decomposition for dual matrices of varying dimensions, with variance values provided in parentheses.
Clearly, in the low-rank problem, the RDQRCP decomposition exhibits faster speed and superior stability, with the potential to handle larger-scale data within memory constraints.

\subsection{Dual Moore-Penrose Generalized Inverse}\label{sec: dual Moore-Penrose generalized inverse}

In previous research on dual matrices, many scholars have investigated the theory of dual Moore-Penrose generalized inverse (DMPGI) \cite{angeles2012dual,udwadia2021does,udwadia2020all,wang2021characterizations}.
We utilize the QR decomposition of dual matrices to compute DMPGI as an example.
Similar to how Definition \ref{def: Moore Penrose Inverse} defines the Moore-Penrose pseudoinverse, we can define the DMPGI accordingly.
\begin{definition}\label{def: DMPGI}
    Given a dual matrix $A\in \mathbb{DR}^{m \times n}$, its dual Moore-Penrose generalized inverse (DMPGI), denoted as $A^{\dagger}$ which is a dual matrix that satisfies the following four properties,
    \begin{equation}
        AA^{\dagger}A =A,\quad A^{\dagger}AA^{\dagger} = A^{\dagger}, \quad (AA^{\dagger})^{\top} = AA^{\dagger}, \quad (A^{\dagger}A)^{\top} = A^{\dagger}A.
    \end{equation}
\end{definition}
In the following theorem, we establish the relationship between the existence of DMPGI and QR decomposition of the dual matrix.
\begin{theorem}\label{the: DMPGI}
    Given a dual matrix $A \in \mathbb{DR}^{m \times n}$ where $A_{s}\in \mathbb{R}^{m \times n}$ is full column rank. Then the DMPGI $A^{\dagger}$ of $A$  exists, and
    \begin{equation}
        A^{\dagger}  = R_{s}^{-1}Q_{s}^{\top} + (R_{s}^{-1}Q_{i}^{\top}-R_{s}^{-1}R_{i}R_{s}^{-1}Q_{s}^{\top})\epsilon
    \end{equation}
    where $A = (Q_{s}+Q_{i}\epsilon)(R_{s}+R_{i}\epsilon)$ is the thin QR decomposition of $A$.
\end{theorem}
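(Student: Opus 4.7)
The plan is to exhibit the claimed expression as $R^{-1}Q^{\top}$, where $A = QR$ is the thin QR decomposition guaranteed by Theorem \ref{the: thin QR decomposition fo dual matrices}, and then verify the four Moore-Penrose conditions of Definition \ref{def: DMPGI} directly. The key observation is that, under the full-column-rank hypothesis on $A_{s}$, the factor $R_{s} \in \mathbb{R}^{n \times n}$ is upper triangular with positive diagonal entries, hence invertible, so by item 4 of Proposition \ref{prop: dual properties} the dual matrix $R$ is also invertible with $R^{-1} = R_{s}^{-1} - R_{s}^{-1}R_{i}R_{s}^{-1}\epsilon$. Simultaneously, item 5 of Proposition \ref{prop: dual properties} ensures that $Q^{\top}Q = I_{n}$.

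First, I would compute $R^{-1}Q^{\top}$ by multiplying
\[
(R_{s}^{-1} - R_{s}^{-1}R_{i}R_{s}^{-1}\epsilon)(Q_{s}^{\top} + Q_{i}^{\top}\epsilon)
\]
and using $\epsilon^{2}=0$ to collect terms. The standard part is $R_{s}^{-1}Q_{s}^{\top}$ and the infinitesimal part is $R_{s}^{-1}Q_{i}^{\top} - R_{s}^{-1}R_{i}R_{s}^{-1}Q_{s}^{\top}$, matching the stated formula exactly. This shows that the candidate and the claimed expression coincide, so the existence claim reduces to verifying that this candidate satisfies the four DMPGI axioms.

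Next, I would check each Moore-Penrose identity in dual-matrix arithmetic, relying on $Q^{\top}Q = I_{n}$ and $R R^{-1} = R^{-1}R = I_{n}$. Specifically: (i) $AA^{\dagger}A = QR\,R^{-1}Q^{\top}\,QR = Q(Q^{\top}Q)R = QR = A$; (ii) $A^{\dagger}AA^{\dagger} = R^{-1}Q^{\top}QR\,R^{-1}Q^{\top} = R^{-1}Q^{\top} = A^{\dagger}$; (iii) $AA^{\dagger} = QQ^{\top}$, whose transpose is itself; (iv) $A^{\dagger}A = R^{-1}Q^{\top}QR = I_{n}$, manifestly symmetric. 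Each verification is a short dual-matrix calculation of the same flavor as in the proof of Theorem \ref{the: QR decomposition fo dual matrices}.

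There is no genuine obstacle here beyond bookkeeping; the only subtlety is making sure the dual-matrix transpose and inverse formulas from Proposition \ref{prop: dual properties} are invoked correctly, in particular that $Q^{\top}Q = I_{n}$ holds to all orders in $\epsilon$ (not merely in the standard part), which is precisely the content of item 5 of Proposition \ref{prop: dual properties}. Once that is in hand, the four identities fall out by the same cancellations as in the real case, and the explicit form of $A^{\dagger}$ follows from the product expansion above.
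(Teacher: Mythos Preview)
Your proposal is correct and follows essentially the same approach as the paper: form the candidate $R^{-1}Q^{\top}$ from the thin QR decomposition, expand it using the dual inverse formula from Proposition \ref{prop: dual properties} to obtain the stated expression, and observe that it satisfies Definition \ref{def: DMPGI}. In fact you are more thorough than the paper, which simply asserts that $A^{\dagger} = R^{\dagger}Q^{\top}$ satisfies the four Moore-Penrose conditions without writing out the verifications (i)--(iv) that you provide.
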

\begin{proof}
    According to Theorem \ref{the: thin QR decomposition fo dual matrices}, since $A_{s}$ is full column rank, the thin QR decomposition of $A = QR = (Q_{s}+Q_{i}\epsilon)(R_{s}+R_{i}\epsilon) = Q_{s}R_{s} + (Q_{s}R_{i}+Q_{i}R_{s})\epsilon$ exists.
    Then the DMPGI of $A$ exists, taking the form $A^{\dagger} = R^{\dagger}Q^{\top}$, satisfying Definition \ref{def: DMPGI}.
    Through rigorous computation, the matrix $R^{\dagger} = R_{s}^{-1}-R_{s}^{-1}R_{i}R_{s}^{-1} \epsilon$ can be derived, from which we can obtain
    \begin{equation}
        \begin{aligned}
            A^{\dagger} & = R^{\dagger}Q^{\top} \\
            & = (R_{s}^{-1}-R_{s}^{-1}R_{i}R_{s}^{-1} \epsilon)(Q_{s}^{\top}+Q_{i}^{\top} \epsilon) \\
            & = R_{s}^{-1}Q_{s}^{\top} + (R_{s}^{-1}Q_{i}^{\top}-R_{s}^{-1}R_{i}R_{s}^{-1}Q_{s}^{\top})\epsilon.
        \end{aligned}
    \end{equation}
\end{proof}
Based on the results provided by Theorem \ref{the: DMPGI}, we can compute the DMPGI using the QR decomposition of the dual matrix.
Utilizing the examples in reference \cite{pennestri2007linear}, let's consider the dual matrix
\begin{equation}
    A_{1} = \begin{bmatrix}
        1&3\\
        9&22\\
        4&4
    \end{bmatrix}+\epsilon\begin{bmatrix}
        4&0\\
        2&4\\
        4&1
    \end{bmatrix},\quad A_{2} = \begin{bmatrix}
        1&3&4\\
        9&22&4
    \end{bmatrix}+\epsilon\begin{bmatrix}
        4&0&1\\
        2&4&4
    \end{bmatrix}.
\end{equation}
By the computation based on Theorem \ref{the: DMPGI}, we determine that the DMPGIs of $A_{1}$ and $A_2$ are respectively
\begin{equation}
    A_{1}^{\dagger} = \begin{bmatrix}
         -0.0508 &  -0.0691  &  0.4182\\
    0.0276  &  0.0727  & -0.1704
    \end{bmatrix}+\epsilon \begin{bmatrix}
        0.8221  & -0.0350 &  -0.4596 \\
   -0.3493  &  0.0117  &  0.1675
    \end{bmatrix}
\end{equation}
and
\begin{equation}
    A_{2}^{\dagger} = \begin{bmatrix}
        -0.0349  &  0.0210\\
   -0.0379  &  0.0438 \\
    0.2872  & -0.0381 \\
    \end{bmatrix}+\epsilon \begin{bmatrix}
        -0.0143   & 0.0002\\
   -0.0350  & -0.0011\\
   -0.0071  & -0.0107\\
    \end{bmatrix}.
\end{equation}

\begin{remark}\label{rem: DMPGI}
    Note that the DMPGI of $A_{1}$ calculated here differs from the result in reference \cite{pennestri2007linear}.
    Through simple numerical verification, we ascertain that our method provides higher accuracy.
    For the DMPGI of $A_{2}$, our computation matches the result in reference \cite{pennestri2007linear}, indicating that with the assistance of QR decomposition, we achieve higher precision in DMPGI.
    Our approach offers an effective, concise, and precise method for computing DMPGI.
\end{remark}

\subsection{Simulations of Standing and Traveling Waves}\label{sec: Simulations of Standing and Traveling Waves.}
In the past, numerous theoretical explorations have paved the way for research into the application aspects of dual numbers.
Eduard Study \cite{study1903geometrie} introduced the concept of dual angles when determining the relative positions of two skew lines in three-dimensional space, where the angle is defined as the standard part and the distance as the infinitesimal part.
Similarly, when dealing with raw time series data, we regard the standard part as the observed data and the infinitesimal part as its derivative or first-order difference, which naturally identifies standing and traveling waves in the simulations.
Consider a spatiotemporal propagating pattern \cite{feeny2008complex}
 \begin{equation}\label{equ: x = 2e cos c -sin d}
     \textbf{x}(t) = 2 e^{\gamma t} [\cos{(\omega t)}\textbf{c} - \sin{ (\omega t)} \textbf{d}],
 \end{equation}
where $\textbf{x}(t)$ represents a real vector of particle positions at the time $t$,  $\gamma$, $\omega$ are real scalars representing the exponential decay rate and angular frequency, and $\textbf{c}$, $\textbf{d}$ are real vectors representing two modes of oscillation.
Overall, (\ref{equ: x = 2e cos c -sin d}) demonstrates the circularity and continuity of propagation between $\textbf{c}$ and $\textbf{d}$.
When $\textbf{c}=\textbf{d}$, (\ref{equ: x = 2e cos c -sin d}) represents a `standing wave' mode oscillation of rank-1.
When $\textbf{c}\neq \textbf{d}$, (\ref{equ: x = 2e cos c -sin d}) represents a `traveling wave' mode oscillation of rank-2 which behaves as a wavy motion.
In previous research \cite{feeny2008complex}, wave propagation percentages of a wave were measured using an index, without distinguishing the exact traveling wave from the original wave.
Wei, Ding, and Wei \cite{wei2024singular} introduced an effective method for traveling wave identification and extraction, including the determination of its propagation path.
Now, we build upon their ideas and explore whether the QR decompositions of dual matrices proposed in this paper can be effectively utilized for traveling wave identification and whether they offer computational advantages.
Here, we employ proper orthogonal decomposition to investigate wave behavior.
Therefore, our focus lies in the $Q$ component of the QR decomposition for the dual matrix.

\begin{figure}[htp]
    \centering
    \includegraphics[width = 0.8\textwidth,height = 1.1\textwidth]{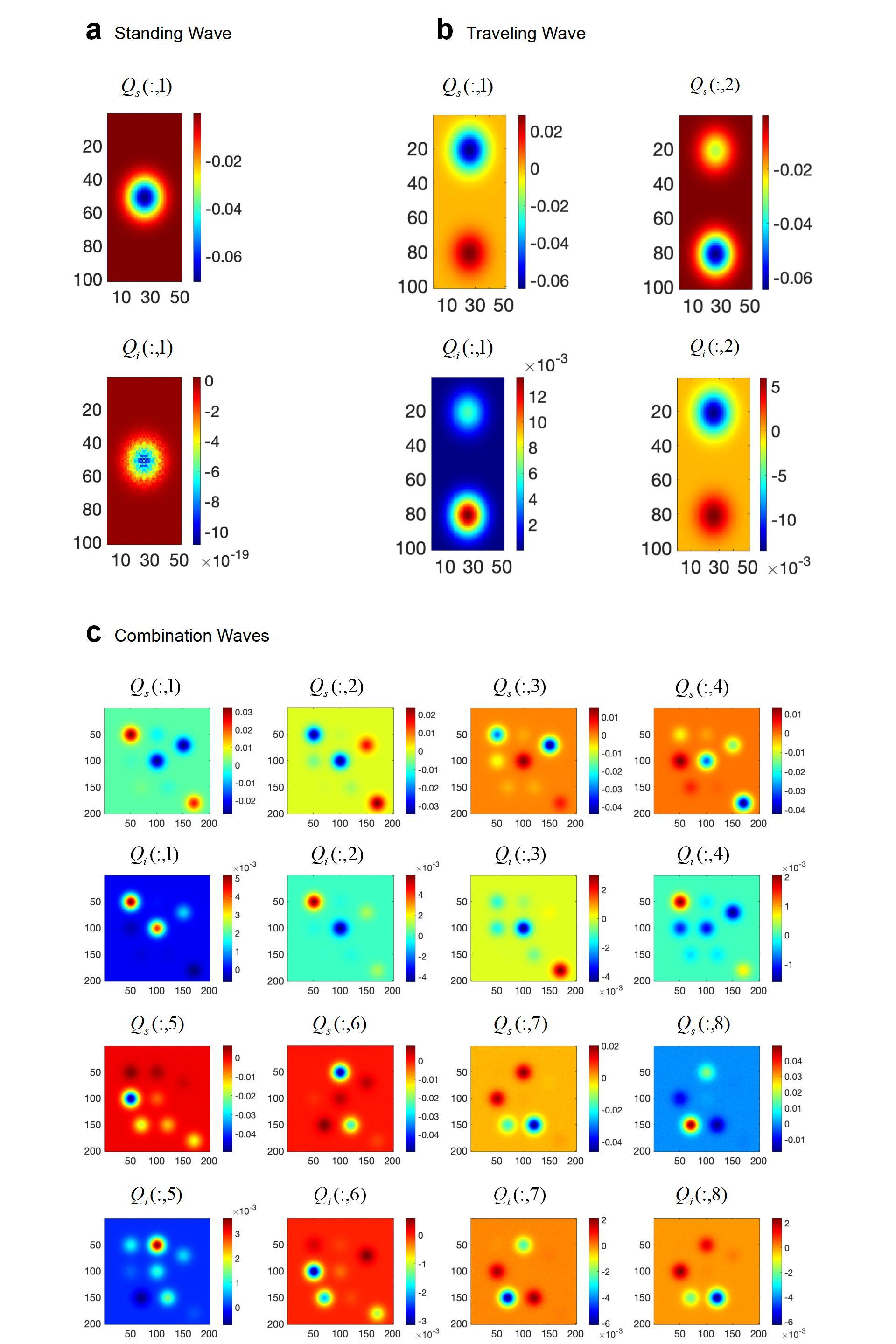}
    \caption{The simulations of pure standing waves exhibit the center positions in the standard part of $Q$ and small values in the infinitesimal part. Simulations of pure traveling waves reveal paired similarities between the standard and infinitesimal parts of $Q$. Leveraging the QR decomposition of the dual matrix and the specific properties of traveling waves, we accurately identify traveling waves within a combination wave composed of four standing waves and two traveling waves.}
    \label{fig:  wave}
\end{figure}

Now, we consider the spatiotemporal propagation patterns in (\ref{equ: x = 2e cos c -sin d}) and construct a two-dimensional grid constructed as a $50 \times 100$ array of pixels by Gaussian waves.
We arrange $m$ particles in rows and $n$ time points in columns, allowing us to construct an $m\times n$ ensemble matrix $X_{s}$.
Here, we set $\textbf{c}$ and $\textbf{d}$ as two-dimensional Gaussian curves.
By taking the derivative of $X_{s}$ with respect to time, we obtain $X_{i}$.
By treating $X_{s}$ as the standard part and $X_{i}$ as the infinitesimal part, we can construct the dual matrix $X$.
When $c=d$, Figure \ref{fig:  wave}(a) illustrates the standard part $Q_{s}$ and infinitesimal part $Q_{i}$ of the dual matrix $X$ after the QR decomposition.
This is clearly a standing wave, with the center point at $(25, 50)$.
When $c\neq d$, Figure \ref{fig:  wave}(b) displays the standard part $Q_{s}$ and infinitesimal part $Q_{i}$ of the dual matrix $X$ after QR decomposition, with center points at $(25, 20)$ and $(25, 80)$.
It's noteworthy that signals $Q_{s}(:,1), Q_{i}(:,2), Q_{s}(:,2)$, and $Q_{i}(:,1)$  exhibit paired similarities, where one is homologous and the other is heterologous.
This property can be succinctly explained under the rank-2 truncated QR decomposition.
First, due to the construction of $X$, we have the range spaces $\textbf{Ran}(X_{i}) \subseteq \textbf{Ran}(X_{s}) $, implying the existence of a matrix $M\in \mathbb{R}^{n \times n}$ such that $X_{s} = X_{i}M$.
Consequently, we have
\begin{equation}\label{equ: I-QQAi = o}
    (I_{m}-Q_{s}Q_{s}^{\top})A_{i}=(I_{m}-Q_{s}Q_{s}^{\top})A_{s}M = (I_{m}-Q_{s}Q_{s}^{\top})Q_{s}R_{s}M = O_{m\times n}.
\end{equation}
For a $2\times 2$ skew-symmetric matrix $P = \begin{bmatrix}
    0 & p_{12}\\
    -p_{12} & 0
\end{bmatrix}$, it is evident that
\begin{equation}\label{equ: Qi = QsP = Qs0p12-p120}
    Q_{i}= Q_{s}P = \begin{bmatrix}
        Q_{s}(:,1)& Q_{s}(:,2)
    \end{bmatrix}\begin{bmatrix}
    0 & p_{12}\\
    -p_{12} & 0
\end{bmatrix} = \begin{bmatrix}
    -p_{12}Q_{s}(:,2) & p_{12}Q_{s}(:,1)
\end{bmatrix}.
\end{equation}
By utilizing (\ref{equ: I-QQAi = o}) and (\ref{equ: Qi = QsP = Qs0p12-p120}) to analyze the $Q$ components of the QR decompositions proposed in this paper, it becomes apparent that paired similarities are an inevitable occurrence.

To validate the effectiveness of traveling wave identification, we construct a combination wave and attempt to separate different signals.
We combine four standing waves and two traveling waves with different frequencies and weights, each with the same standard deviation $\sigma = 1$.
Additionally, we introduce Gaussian noise with a peak value of $10^{-3}$.
Using the same methodology as before, we treat the data matrix of the combination wave as the standard part and its first-order derivative as the infinitesimal part to generate the dual matrix $Y$.
We perform a QR decomposition on $Y$ and utilize $Q$ to separate the six different waves.
Figure \ref{fig:  wave}(c) illustrates the standard and infinitesimal parts of the first four columns of $Q$, corresponding to four weighted standing waves.
The centers of these four standing waves (50, 50), (100, 100), (150, 70), and (170, 180) represent the wave peaks.
Similarly, in the standard and infinitesimal parts of columns 5 to 8 of $Q$, we can observe two pairs of traveling waves.
In one pair, (50, 100) and (100, 50) represent the wave peaks, while in the other pair, (70, 150) and (120, 150) serve as the wave peaks.

In summary, our QR decomposition of the dual matrix provides an effective approach for traveling wave identification through proper orthogonal decomposition.
Moreover, it is computationally straightforward.

\subsection{Traveling Wave Identification in the Brain}\label{sec: Traveling Wave Identification In the Brain}

Building upon prior research regarding the application of dual matrices in traveling wave identification for time series data in the brain, we hypothesize that performing a proper orthogonal decomposition on them yields superior results.
Subsequently, we employ the previously proposed QR decomposition algorithm for dual matrices to test our hypothesis.

In our subsequent experiments, we made use of the Human Connectome Project (HCP) database \cite{van2013wu}, which comprises high-resolution 3D magnetic resonance scans of healthy adults aged between 22 to 35 years.
This database includes task-state functional magnetic resonance imaging (tfMRI) images, a pivotal imaging modality in medical analysis.
These tfMRI images were acquired while the subjects engaged in various tasks designed to stimulate cortical and subcortical networks.
Each tfMRI scan encompasses two phases: one with right-to-left phase encoding and the other with left-to-right phase encoding.
Achieving an in-plane field of view (FOV) rotation was accomplished by inverting both the readout (RO) and phase encoding (PE) gradient polarity \footnote{\url{https://www.humanconnectome.org/storage/app/media/documentation/s1200/HCP_S1200_Release_Reference_Manual.pdf}}.

Building upon the theoretical groundwork laid out in Section \ref{sec: Simulations of Standing and Traveling Waves.} regarding traveling wave identification by proper orthogonal decomposition, our present objective is to delve into the associations between this methodology and empirical brain responses within functional brain regions.
Specifically, we concentrate on assessing the applicability of traveling wave identification within brain regions associated with typical language processing tasks encompassing all seven types, including semantic and phonological processing tasks, as initially developed by Binder {\it et al.} \cite{binder2011mapping}.
In this experimental setup, Binder {\it et al.} selected four blocks of a story comprehension task and four blocks of an arithmetic task.
The story comprehension task involved the presentation of short auditory narratives consisting of 5-9 sentences, followed by binary-choice questions related to the central theme of the story.
Conversely, the arithmetic task required participants to engage in addition and subtraction calculations during auditory experiments.

For our investigation, we considered a randomly chosen participant from the aforementioned experiment.
The corresponding task-state functional magnetic resonance imaging (tfMRI) sequence data associated with the language processing task is stored as a 4-dimensional voxel-based image, encompassing $91 \times 109\times 91$ three-dimensional spatial locations and 316 frames.
The voxel size is set at 2mm, and the repetition time (TR) is established at 0.72 seconds.
Data preprocessing assumes a pivotal role in maintaining substantial information while effectively compressing the image matrix.
We initiated temporal filtering of the tfMRI signals, employing a Butterworth band-pass zero-phase filter in the frequency range of 0.01-0.1 Hz.
This step aligns the signals with the conventional low-frequency range.
Subsequently, we applied spatial smoothing, employing a 3-dimensional Gaussian low-pass filter with a half-maximum width of 5mm, involving convolution and exclusion of NANs.
To ensure uniformity in voxel size across different frames, we implemented a Z-score transformation on the BOLD time series of all voxels. This transformation ensures a mean of zero and unit variance, maintaining consistency in the data.

 \begin{figure}[htb]
   \centering
    \begin{minipage}[c]{0.48\linewidth}
    \centering
    \includegraphics[width=1\linewidth]{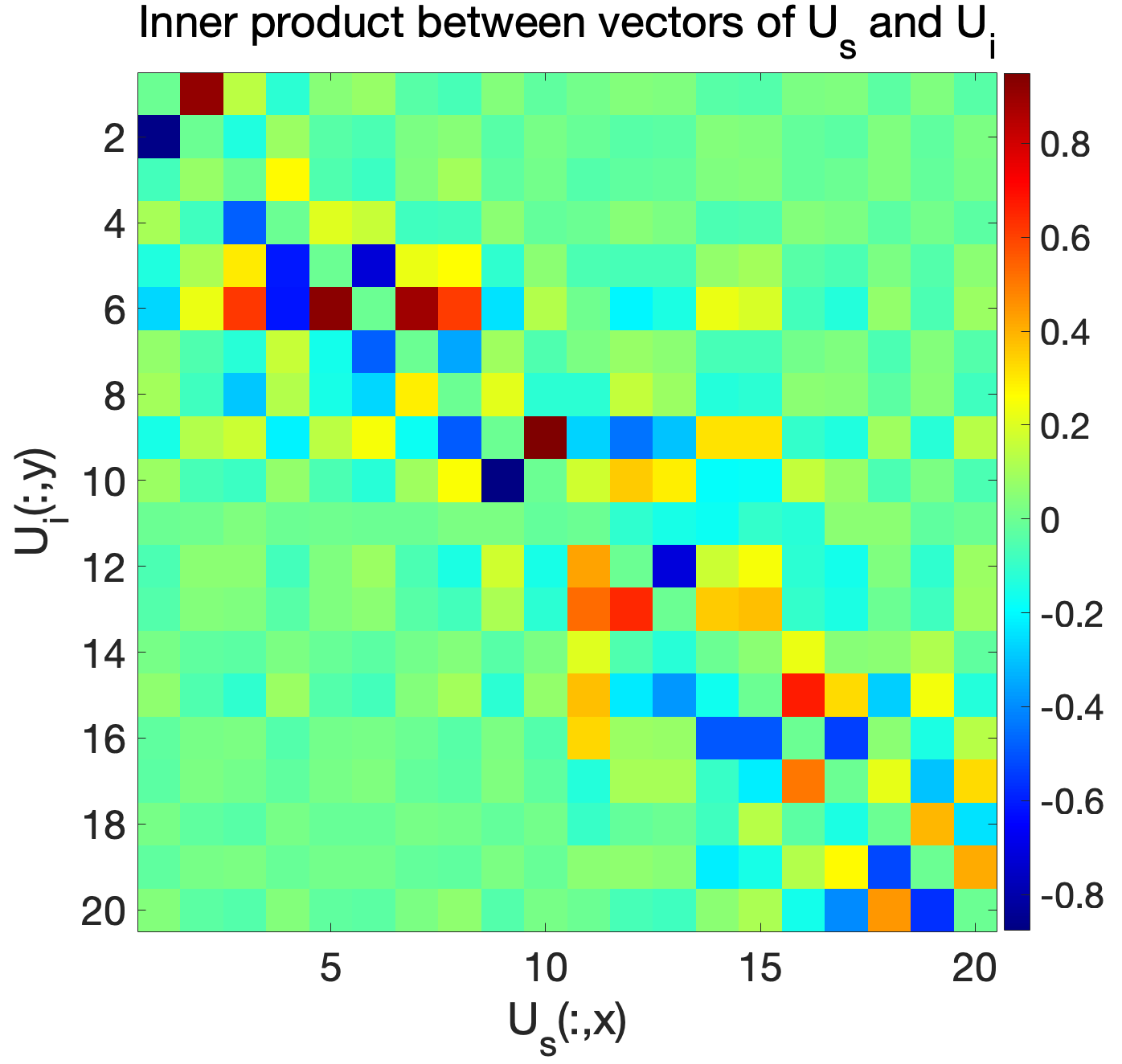}
    \end{minipage}
    \begin{minipage}[c]{0.48\linewidth}
    \centering
    \includegraphics[width=1\linewidth]{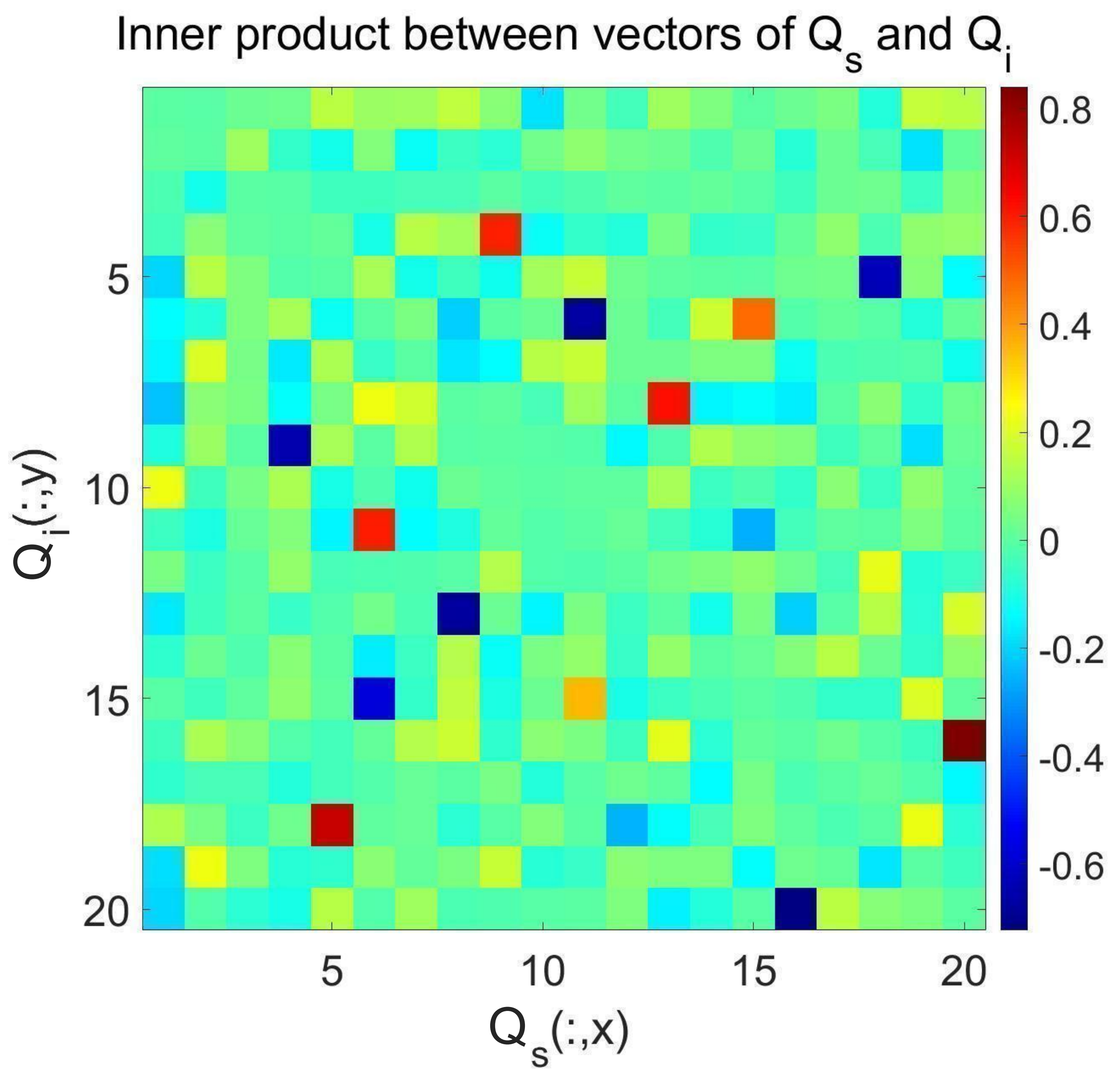}
    \end{minipage}
   \caption{Inner product between vectors of the standard part and infinitesimal part in the CDSVD and the RDQRCP decomposition}
   \label{fig: Inner}
 \end{figure}

Following this, our primary aim is to derive a dual matrix from the original brain sequence data matrix and affirm the coherence between areas displaying significant extracted traveling waves and the respective functional cortical activities in the brain.
To achieve this, the initial three spatial dimensions are transformed into column vectors for each frame, and the fourth temporal dimension is sequentially appended.
This operation culminates in the creation of a real matrix.
This real matrix constitutes what we refer to as the standard component of the dual matrix $D$.
Similarly, the infinitesimal component is acquired by performing first-order temporal differentiation.

By utilizing the previously proposed RDQRCP in Algorithm \ref{alg: RDQRCP} of the dual matrix $D$, we easily obtain the desired proper orthogonal decomposition. Furthermore, compared to other researchers' compact dual singular value decomposition (CDSVD) \cite{wei2024singular}, our orthogonality is more pronounced.
While CDSVD requires 14.661 seconds, we only need 5.660  seconds, providing a significant advantage in terms of processing time.

In Figure \ref{fig: Inner}, the utilization of CDSVD \cite{wei2024singular} and the RDQRCP is presented in proper orthogonal decomposition, showcasing the results of inner products between the standard part and infinitesimal part.
More specifically, within the QR decomposition for dual matrices, the inner product computed between $Q_{s}(:, 16)$ and $Q_{i}(:, 20)$ yields a substantial value of 0.840.
Similarly, for $Q_{i}(:, 20)$ and $Q_{s}(:, 16)$, the inner product registers -0.723.
These calculations unveil the presence of a conspicuous traveling wave, characterized by the dominant principal components residing in the standard part.
Subsequent to this discovery, the rank-2 matrix, originating from the second and third principal components, is meticulously rescaled to its original dimensions.
This rescaled matrix is then subjected to spatial projection onto the brain's anatomical
structure using the `BrainNet' toolkit within the MATLAB environment \cite{xia2013brainnet}.
Through a sequential concatenation of these brain images, a dynamic video representation is created.
This video provides a visually striking means of observing the temporal dynamics of brain activity across distinct regions, as depicted through the dynamic variations in color representation.

\begin{figure}[htp]
    \centering
    \includegraphics[width = 0.98\textwidth]{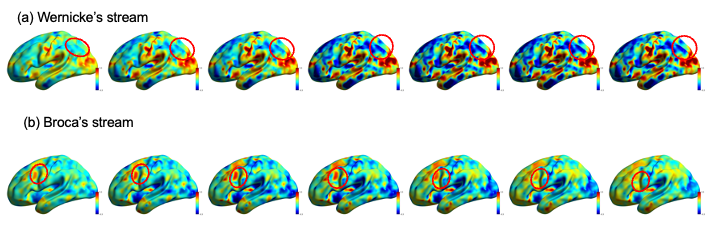}
    \caption{Paths of traveling waves including Wernicke’s stream and Broca’s stream.}
    \label{fig: area}
\end{figure}
In Figure \ref{fig: area} (a), we can discern a substantial signal propagation.
This module represents a language task in progress, where the wave originates from Brodmann area 39 (angular gyrus) and terminates at Brodmann area 40 (supramarginal gyrus) \cite{sakurai2017brodmann}.
Both of these regions are intricately linked with Wernicke's area, a pivotal node in language processing, particularly in the comprehension of speech.
This specific cerebral region plays a critical role in the interpretation and generation of language, especially in speech comprehension.
It enables us to adeptly grasp spoken language from others and articulate our own thoughts and emotions.
Additionally, the prominence of this wave in the left hemisphere, as opposed to the right hemisphere, underscores the left hemisphere's primary role in language processing.

Simultaneously, in Figure \ref{fig: area} (b), we also discern the propagation of the wave from Brodmann area 44 (pars opercularis) to Brodmann area 45 (Pars triangularis), both of which are constituents of Broca's area \cite{friederici2017language}.
This region plays a pivotal role in language processing, encompassing tasks related to the formulation and organization of language expression, as well as the processing of grammatical structures and syntax.
Lesions or impairments in this area can lead to Broca's aphasia, characterized by challenges in sentence production and reduced linguistic fluency, while language comprehension abilities remain largely intact.
Additionally, it is implicated in language memory functions, particularly short-term memory processes.

To summarize, our findings have delineated two distinct wave patterns that correspond to well-established brain functional circuits, thus corroborating prior research in the field of cognitive neuroscience.
In essence, this serves as a robust validation mechanism that reinforces the established conclusions within this domain.

\small{
\appendix

\label{appendix}
\section{Proof of Theorem 3.2 and Theorem 3.3}\label{sec: appendix}

The proof of Theorem \ref{the: thin QR decomposition fo dual matrices}. \\
\begin{proof}
	Suppose the thin QR decomposition of $A$ exists, then we have that
	\begin{equation}\label{equ: thin As=QsRs Ai = QsRi + QiRs}
		\begin{cases}
			A_{s} = Q_{s}R_{s}\\
			A_{i} = Q_{s}R_{i}+Q_{i}R_{s}
		\end{cases}
	\end{equation}
	through simple computation. By treating $R_{i}$ as $X$ and $Q_{i}$ as $Y$, the given equation $A_{i} = Q_{s}R_{i}+Q_{i}R_{s}$ in (\ref{equ: thin As=QsRs Ai = QsRi + QiRs}) transforms into a Sylvester equation involving $Q_{i}$ and $R_{i}$. Utilizing  Theorem \ref{the: solution of Sylvester equation}, if and only if
	\begin{equation}\label{equ: (I-QsQsT)Ai(I-RsRs)}
		(I_{m}-Q_{s}Q_{s}^{\dagger})A_{i}(I_{n}-R_{s}^{\dagger}R_{s}) = O_{m\times n},
	\end{equation}
	which holds true identically by the invertibility nature of $R_{s}$ and the fact $R_{s}^{\dagger} = R_{s}^{-1}$, one can derive the solution
	\begin{equation}\label{equ: thin RiQi after Sylvester}
		\begin{cases}
			Q_{i} &= -(I_{m}-Q_{s}Q_{s}^{\dagger})A_{i}(-R_{s})^{\dagger}+Z-(I_{m}-Q_{s}Q_{s}^{\dagger})Z(-R_{s})(-R_{s})^{\dagger} \\
			R_{i} &= Q_{s}^{\dagger}A_{i}+Q_{s}^{\dagger}Z(-R_{s})+(I_{n}-Q_{s}^{\dagger}Q_{s})W
		\end{cases}
	\end{equation}
	where $Z \in \mathbb{R}^{m \times n}$ and $W \in \mathbb{R}^{n \times n}$ are arbitrary.
	
	Considering the fact $Q_{s}^{\dagger} = Q_{s}^{\top}$ and $R_{s}^{\dagger}=R_{s}^{-1}$, we have
	\begin{equation}\label{equ: Qi = (I-QQT)AiRs-1 + QQTZ}
		\begin{aligned}
			Q_{i} & = -(I_{m}-Q_{s}Q_{s}^{\dagger})A_{i}(-R_{s})^{\dagger}+Z-(I_{m}-Q_{s}Q_{s}^{\dagger})Z(-R_{s})(-R_{s})^{\dagger} \\
			& = (I_{m}-Q_{s}Q_{s}^{\top})A_{i}R_{s}^{-1}+Z-(I_{m}-Q_{s}Q_{s}^{\top})ZR_{s}R_{s}^{-1} \\
			& = (I_{m}-Q_{s}Q_{s}^{\top})A_{i}R_{s}^{-1} + Q_{s}Q_{s}^{\top}Z,
		\end{aligned}
	\end{equation}
	and
	\begin{equation}\label{equ: Ri =QTA-QsTZR}
		\begin{aligned}
			R_{i} &= Q_{s}^{\dagger}A_{i}+Q_{s}^{\dagger}Z(-R_{s})+(I_{n}-Q_{s}^{\dagger}Q_{s})W \\
			& = Q_{s}^{\top}A_{i}-Q_{s}^{\top}ZR_{s},
		\end{aligned}
	\end{equation}
	where $Z \in \mathbb{R}^{m \times n}$ is arbitrary.
	
	Since $Q_{s}\in \mathbb{R}^{m \times n}$ is a matrix with orthogonal columns, there exists a matrix $\widehat{Q}_{s} \in \mathbb{R}^{m \times (m-n)}$ with orthogonal columns, such that $[Q_{s},\widehat{Q}_{s}]$ is an orthogonal matrix. Thus, $Z = Q_{s}P + \widehat{Q}_{s}\widehat{P}$, where $P\in \mathbb{R}^{n \times n}$ and $\widehat{P} \in \mathbb{R}^{(m-n)\times n}$ are arbitrary matrices. At this moment, we have $Q_{s}^{\top}Z = Q_{s}^{\top}(Q_{s}P + \widehat{Q}_{s}\widehat{P}) = P$.
	Then the expressions for $Q_{i}$ and $R_{i}$ are as follows,
	\begin{equation}\label{equ: Qi Ri in thin proof}
		\begin{cases}
			Q_{i} & = (I_{m}-Q_{s}Q_{s}^{\top})A_{i}R_{s}^{-1} + Q_{s}P \\
			R_{i}& = Q_{s}^{\top}A_{i}-PR_{s},
		\end{cases}
	\end{equation}
	where $P\in \mathbb{R}^{n \times n}$ is an arbitrary matrix.
	
	Just as we discussed in  Theorem \ref{the: QR decomposition fo dual matrices}, we aim to exploit the orthogonal column properties of $Q$ and the upper triangular nature of $R$ to ascertain the values of $P$. In this regard, we first contemplate the scenario where $Q$ is a dual matrix with orthogonal columns. Thus, we have
	\begin{equation}\label{equ: QsQi+QiQs=On}
		Q_{s}^{\top}Q_{i} + Q_{i}^{\top}Q_{s} = O_{n }.
	\end{equation}
	By substituting the expression for $Q_{i}$ from (\ref{equ: Qi Ri in thin proof}), we can obtain
	\begin{equation}\label{equ: O=P+PT}
		\begin{aligned}
			O_{n} & = Q_{s}^{\top}[(I_{m}-Q_{s}Q_{s}^{\top})A_{i}R_{s}^{-1} + Q_{s}P] + [(I_{m}-Q_{s}Q_{s}^{\top})A_{i}R_{s}^{-1} + Q_{s}P]^{\top}Q_{s} \\
			& = Q_{s}^{\top}Q_{s}P +(Q_{s}P)^{\top}Q_{s} \\
			& = P+P^{\top}
		\end{aligned}
	\end{equation}
	with the fact $Q_{s}^{\top}(I_{m}-Q_{s}Q_{s}^{\top}) = O_{n}$.
	
	We deduce that $P\in \mathbb{R}^{n \times n}$ is a skew-symmetric matrix from (\ref{equ: O=P+PT}). Next, we leverage the upper triangular property of $R$ to ascertain the elements within matrix $P$. Considering the expression of $R_{i}$ in (\ref{equ: Qi Ri in thin proof}), we have
	\begin{equation}\label{equ: PRs = QsTAi-Ri thin}
		PR_{s} = Q_{s}^{\top}A_{i}-R_{i}.
	\end{equation}
	We denote $B = Q_{s}^{\top}A_{i} \in\mathbb{R}^{n \times n}$ and $B = [b_{1},b_{2},\ldots,b_{n}], R_{i} = [r_{i1},r_{i2},\ldots,r_{in}], R_{s} = [r_{s1},r_{s2},\ldots,r_{sn}]$ where $b_{k},r_{ik},r_{sk}\in \mathbb{R}^{n},k = 1,2,\ldots,n $. By partitioning the matrices column-wise and given that $R_s$ is of
	full rank, (\ref{equ: PRs = QsTAi-Ri thin}) is transformed into the following system of $n$ equations,
	\begin{equation}\label{equ: Prs=B1-Ri1 thin}
		\begin{cases}
			Pr_{s1} &= b_{1}-r_{i1}, \\
			Pr_{s2} &= b_{2}-r_{i2}, \\
			& \ldots \\
			Pr_{sn} &= b_{n}-r_{in}.\\
		\end{cases}
	\end{equation}
	Exploiting that matrix $R_{s}$ is upper triangular, along with  $R_{s} =(r_{s_{ij}})\in \mathbb{R}^{n\times n}$ and $P = [p_{1},p_{2},\ldots,p_{n}]$ where $p_{k}\in \mathbb{R}^{n},k = 1,2,\ldots n$, the system (\ref{equ: Prs=B1-Ri1 thin}) can be transmuted into
	\begin{equation}\label{equ: rs11p1=b1-ri1 thin}
		\begin{cases}
			r_{s_{11}}p_{1} &= b_{1}-r_{i1}, \\
			r_{s_{12}}p_{1}+r_{s_{22}}p_{2} &= b_{2}-r_{i2}, \\
			& \ldots \\
			r_{s_{1n}}p_{1}+r_{s_{2n}}p_{2}+\ldots+r_{s_{nn}}p_{n} &= b_{n}-r_{in}.
		\end{cases}
	\end{equation}
	Recognizing the upper triangular structure of $R_{i}$, it follows that for $r_{ik},k = 1,2,\ldots, (n-1)$, the lower $n-k$ entries $r_{ik}(k+1:n)$ are all zeros. Consequently, $p_{k}$ also requires computation solely for the lower $n-k$ entries $p_{k}(k+1:n)$, thereby ensuring the maintenance of $P$'s skew-symmetric property. Hence, the system of $(n-1)$ equations has the solutions
	\begin{equation}
		\begin{cases}
			p_{1}(2:n) &= b_{1}(2:n)/r_{s_{11}}, \\
			p_{2}(3:n) &= (b_{2}(3:n)-r_{s_{12}}p_{1}(3:n))/r_{s_{22}}, \\
			& \ldots \\
			p_{n-1}(n) &= (b_{n-1}(n)-\sum\limits_{t = 1}^{n-2}r_{s_{t(n-1)}}p_{t}(n))/r_{s_{(n-1)(n-1)}}.
		\end{cases}
	\end{equation}
	Thus, we complete the proof.
\end{proof}
\bigskip

The proof of Theorem \ref{the: RDQRCP}.\\

\begin{proof}
	Suppose randomized QR decomposition with column pivoting in the standard part of $A$ exists, then we have that
	\begin{equation}\label{equ: rand As=QsRs Ai = QsRi + QiRs}
		\begin{cases}
			A_{s}P_{s} = Q_{s}R_{s}\\
			A_{i}P_{s} = Q_{s}R_{i}+Q_{i}R_{s}
		\end{cases}
	\end{equation}
	through computation directly. By treating $R_{i}$ as $X$ and $Q_{i}$ as $Y$, the given equation $A_{i}P_{s} = Q_{s}R_{i}+Q_{i}R_{s}$ in (\ref{equ: rand As=QsRs Ai = QsRi + QiRs}) transforms into a Sylvester equation involving $Q_{i}$ and $R_{i}$. Utilizing Theorem \ref{the: solution of Sylvester equation} and the fact $Q_{s}^{\dagger}=Q_{s}^{\top}$, if and only if
	\begin{equation}\label{equ: (I-QsQsT)AiP_{s}(I-RsRs)}
		(I_{m}-Q_{s}Q_{s}^{\top})A_{i}P_{s}(I_{n}-R_{s}^{\dagger}R_{s}) = O_{m\times n}
	\end{equation}
	holds true, one can derive the solution
	\begin{equation}\label{equ: rand RiQi after Sylvester}
		\begin{cases}
			Q_{i} &= -(I_{m}-Q_{s}Q_{s}^{\dagger})A_{i}P_{s}(-R_{s})^{\dagger}+Z-(I_{m}-Q_{s}Q_{s}^{\dagger})Z(-R_{s})(-R_{s})^{\dagger} \\
			R_{i} &= Q_{s}^{\dagger}A_{i}P_{s}+Q_{s}^{\dagger}Z(-R_{s})+(I_{k}-Q_{s}^{\dagger}Q_{s})W
		\end{cases}
	\end{equation}
	where $Z \in \mathbb{R}^{m \times k}$ and $W \in \mathbb{R}^{k \times n}$ are
	arbitrary.
	
	Considering the fact $Q_{s}^{\dagger} = Q_{s}^{\top}$ and $R_{s}R_{s}^{\dagger} = I_{k}$, we have
	\begin{equation}\label{equ: Qi = (I-QsQs)APR}
		\begin{aligned}
			Q_{i} & = -(I_{m}-Q_{s}Q_{s}^{\dagger})A_{i}P_{s}(-R_{s})^{\dagger}+Z-(I_{m}-Q_{s}Q_{s}^{\dagger})Z(-R_{s})(-R_{s})^{\dagger} \\
			& = (I_{m}-Q_{s}Q_{s}^{\top})A_{i}P_{s}R_{s}^{\dagger}+Z-(I_{m}-Q_{s}Q_{s}^{\top})ZR_{s}R_{s}^{\dagger} \\
			& = (I_{m}-Q_{s}Q_{s}^{\top})A_{i}P_{s}R_{s}^{\dagger} + Q_{s}Q_{s}^{\top}Z,
		\end{aligned}
	\end{equation}
	and
	\begin{equation}\label{equ: Ri =QTAP-QsTZR}
		\begin{aligned}
			R_{i} &= Q_{s}^{\dagger}A_{i}P_{s}+Q_{s}^{\dagger}Z(-R_{s})+(I_{n}-Q_{s}^{\dagger}Q_{s})W \\
			& = Q_{s}^{\top}A_{i}P_{s}-Q_{s}^{\top}ZR_{s},
		\end{aligned}
	\end{equation}
	where $Z \in \mathbb{R}^{m \times k}$ is arbitrary.
	
	Since $Q_{s}\in \mathbb{R}^{m \times k}$ is a matrix with orthogonal columns, there exists a matrix $\widehat{Q}_{s} \in \mathbb{R}^{m \times (m-k)}$ with orthogonal columns, such that $[Q_{s},\widehat{Q}_{s}]$ is an orthogonal matrix. Thus, $Z = Q_{s}P + \widehat{Q}_{s}\widehat{P}$, where $P\in \mathbb{R}^{k \times k}$ and $\widehat{P} \in \mathbb{R}^{(m-k)\times k}$ are arbitrary matrices. At this moment, we have $Q_{s}^{\top}Z = Q_{s}^{\top}(Q_{s}P + \widehat{Q}_{s}\widehat{P}) = P$ .Then the expressions for $Q_{i}$ and $R_{i}$ are as follows,
	\begin{equation}\label{equ: Qi Ri in rand proof}
		\begin{cases}
			Q_{i} & = (I_{m}-Q_{s}Q_{s}^{\top})A_{i}P_{s}R_{s}^{\dagger} + Q_{s}P \\
			R_{i}& = Q_{s}^{\top}A_{i}P_{s}-PR_{s},
		\end{cases}
	\end{equation}
	where $P\in \mathbb{R}^{k \times k}$ is an arbitrary matrix.
	
	First, we contemplate the scenario where $Q$ is a dual matrix with orthogonal columns. Thus, we have
	\begin{equation}\label{equ: QsQi+QiQs=Ok}
		Q_{s}^{\top}Q_{i} + Q_{i}^{\top}Q_{s} = O_{k }.
	\end{equation}
	By substituting the expression for $Q_{i}$ from (\ref{equ: Qi Ri in rand proof}), we can obtain
	\begin{equation}\label{equ: Ok=P+PT}
		\begin{aligned}
			O_{k } & = Q_{s}^{\top}[(I_{m}-Q_{s}Q_{s}^{\top})A_{i}P_{s}R_{s}^{\dagger} + Q_{s}P] + [(I_{m}-Q_{s}Q_{s}^{\dagger})A_{i}P_{s}R_{s}^{\dagger} + Q_{s}P]^{\top}Q_{s} \\
			& = Q_{s}^{\top}Q_{s}P +(Q_{s}P)^{\top}Q_{s} \\
			& = P+P^{\top}
		\end{aligned}
	\end{equation}
	with the fact $Q_{s}^{\top}(I_{m}-Q_{s}Q_{s}^{\top}) = O_{k}$.
	
	We deduce that $P\in \mathbb{R}^{k \times k}$ is a skew-symmetric matrix from (\ref{equ: Ok=P+PT}). Next, we leverage the upper trapezoidal property of $R$ to ascertain the elements within matrix $P$. Considering the expression of $R_{i}$ in (\ref{equ: Qi Ri in rand proof}), we have
	\begin{equation}\label{equ: PRs = QsTAi-Ri rand}
		PR_{s} = Q_{s}^{\top}A_{i}P_{s}-R_{i}.
	\end{equation}
	We denote $B = Q_{s}^{\top}A_{i}P_{s} \in\mathbb{R}^{k \times n}$ and $B = [b_{1},b_{2},\ldots,b_{n}], R_{i} = [r_{i1},r_{i2},\ldots,r_{in}], R_{s} = [r_{s1},r_{s2},\ldots,r_{sn}]$ where $b_{t},r_{it},r_{st}\in \mathbb{R}^{k},t = 1,2,\ldots,n $. By partitioning the matrices column-wise and given that $R_s$ is of full rank, (\ref{equ: PRs = QsTAi-Ri rand}) is transformed into the following system of $n$ equations,
	\begin{equation}\label{equ: Prs=B1-Ri1 rand}
		\begin{cases}
			Pr_{s1} &= b_{1}-r_{i1}, \\
			Pr_{s2} &= b_{2}-r_{i2}, \\
			& \ldots \\
			Pr_{sn} &= b_{n}-r_{in}.\\
		\end{cases}
	\end{equation}
	Exploiting that matrix $R_{s}$ is upper trapezoidal, along with  $R_{s} = (r_{s_{ij}})\in \mathbb{R}^{k\times n}$ and $P = [p_{1},p_{2},\ldots,p_{k}]$ where $p_{t}\in \mathbb{R}^{k},t = 1,2,\ldots n$, the overdetermined system (\ref{equ: Prs=B1-Ri1 rand}) can be transmuted into
	\begin{equation}\label{equ: rs11p1=b1-ri1 rand}
		\begin{cases}
			r_{s_{11}}p_{1} &= b_{1}-r_{i1}, \\
			r_{s_{12}}p_{1}+r_{s_{22}}p_{2} &= b_{2}-r_{i2}, \\
			& \ldots \\
			r_{s_{1n}}p_{1}+r_{s_{2n}}p_{2}+\ldots+r_{s_{kn}}p_{k} &= b_{n}-r_{in}.
		\end{cases}
	\end{equation}
	Recognizing the upper trapezoidal structure of $R_{i}$, it follows that for $r_{it},t = 1,2,\ldots, (k-1)$, the lower $k-t$ entries $r_{it}(t+1:k)$ are all zeros. Consequently, $p_{t}$ also requires computation solely for the lower $k-t$ entries $p_{t}(t+1:k)$, thereby ensuring the maintenance of $P$'s skew-symmetric property. Moreover, the consistency of an over-determined system (\ref{equ: Prs=B1-Ri1 rand}) can be uniquely determined by $R_{i}(:,k+1:n)$. Hence, the over-determined system of $(k-1)$ equations has the solutions
	\begin{equation}
		\begin{cases}
			p_{1}(2:k) &= b_{1}(2:k)/r_{s_{11}}, \\
			p_{2}(3:k) &= (b_{2}(3:k)-r_{s_{12}}p_{1}(3:k))/r_{s_{22}}, \\
			& \ldots \\
			p_{k-1}(k) &= (b_{k-1}(k)-\sum\limits_{t = 1}^{k-2}r_{s_{t(k-1)}}p_{t}(k))/r_{s_{(k-1)(k -1)}}.
		\end{cases}
	\end{equation}
	Thus, we complete the proof.
\end{proof}
}
\section*{Data Availability}
The numerical data shown in the figures are available upon request, and so is the code to	run the experiment and generate the figures.

\section*{Declarations}
Conflicts of Interest.  The authors declare no competing interests.

\footnotesize
\bibliographystyle{abbrv}
\bibliography{reference}

\end{document}